\DeclareMathOperator{\lcm}{lcm}
\title[Genus Field for an Elementary Abelian Extension]
{Genus Field and Extended Genus Field of an Elementary Abelian Extension of Global Fields.}
\author[J.C. Hernandez-Bocanegra]{Juan Carlos Hernandez--Bocanegra}
\address{Departamento de Control Autom\'atico\\
Centro de Investigaci\'on y de Estudios Avanzados del I.P.N.}
\email{juan.cuencame@gmail.com}
 \author[G. Villa]{Gabriel Villa--Salvador}
\address{Departamento de Control Autom\'atico\\
Centro de Investigaci\'on y de Estudios Avanzados del I.P.N.}
\email{gvillasalvador@gmail.com, gvilla@ctrl.cinvestav.mx}
\subjclass[2010]{Primary 11R58; Secondary 11R29, 11R60}
\keywords{Global fields, genus fields, extended
genus fields, Dirichlet characters, class fields}
\date{August 14., 2022}
\newtheorem{teo}{Theorem}[section]
\newtheorem{pro}[teo]{Proposition}
\theoremstyle{definition}
\newtheorem{definition}[teo]{Definition}
\newtheorem{rem}[teo]{Remark}
\begin{document}

\begin{abstract}
In the present work we give the construction of the genus field and the extended genus field of an elementary abelian $l$-extension of a field of rational functions, where $l$ is a prime number. In the Kummer case, if $K$ is contained in a cyclotomic funtion field, the construction is given using Leopoldt's ideas by means of Dirichlet characters. Following the definition of Anglès and Jaulent of extended Hilbert class field, we obtain the extended genus field of an elementary abelian $l$-extension of a field of rational functions.
\end{abstract}

\maketitle

\section{Introduction}
\hfill\break

The study of \textit {genus fields} began with C.F. Gauss in \cite{gauss}, in the context of binary quadratic forms. H. Hasse \cite{hasse1951} studied the extended genus field theory for quadratic number field, using characters and class field theory. It was H. W. Leopoldt \cite{leopoldt1953} who determined the extended genus field $ K_{\mathfrak{gex}} $ of an abelian extension $ K $ of $ \mathbb {Q} $, and studied the arithmetic of $K$ using Dirichlet characters, generalizing Hasse's work.

For an arbitrary finite extension $ K/\mathbb {Q} $, the genus field $ K_{\mathfrak{ge}} $ of $ K $ is the composition of $ K $ and the maximum abelian extension $k^{*} $ of $ \mathbb{Q} $ contained in the Hilbert class field of $K$, that is, $ K_{\mathfrak{ge}}=Kk^{*} $. This definition was given by A. Fröhlich \cite{frohlich1983central}. Similarly, the extended genus field $ K_{\mathfrak{gex}} $ of $ K $, is defined as $ K_{\mathfrak{gex}} = KL $ where $ L/\mathbb{Q} $ the maximal abelian extension contained in the extended Hilbert class field of $K$.

For a number field $ K $, the \textit {Hilbert class field} $ K_{H} $ of $ K $ is defined as the maximum abelian extension of $ K $ unramified at any prime. The \textit {extended Hilbert class field} $ K_{H^{+}} $ of $k$ is defined as the maximum abelian extension of $ K $ unramified at every finite prime, these definitions are without any ambiguity. We have that $ K \subseteq K_{\mathfrak{ge}}\subseteq K_{H} $ and $ \textrm{Gal}(K_{H}/K) $ is isomorphic to the class group $ Cl_{K} $ of $ K $. The genus field $ K_{\mathfrak{ge}} $ corresponds to a subgroup $ G_{K} $ of $ Cl_{K} $. We have $ \textrm{Gal}(K_{\mathfrak{ge}}/K) \cong Cl_{K} / G_{K} $. The degree $ [K_{\mathfrak{ge}}: K] $ is called the \textit{genus number} of $ K $ and the group $ \textrm{Gal}(K_{\mathfrak{ge}}/K) $ is known as the \textit{genus group}. Similarly for $ K\subseteq K_{\mathfrak{gex}} \subseteq K_{H^{+}} $.

M. Ishida \cite{ishida2006genus} described the genus field $K_{\mathfrak{ge}} $ for any finite extension $K$ of $ \mathbb{Q} $. X. Zhang \cite{zhang1985} gave a simple expression of $K_{\mathfrak{ge}}$ for any number field, using Hilbert's ramification theory.

In global function fields, there are several definitions of the Hilbert class field depending on the aspect you want to study. Following the ideas of number fields, the definition of the Hilbert class field of a global function field $K$ over $\mathbb{F}_{q}$ as the maximum unramified abelian extension of $ K $, turns out that it contains all constant extensions $ K_{m} \coloneqq K \mathbb {F}_{q^{m}}$ for any positive integer $m$. Therefore, the direct definition of Hilbert class field as the maximum unramified abelian extension, has the disadvantage of being of infinite degree over the base field.

M. Rosen \cite{rosen1987hilbert} gave a definition of the Hilbert class field of $ K $. For a fixed finite non-empty set $ S_{\infty} $ of prime divisors of $ K $, the Hilbert class field of $K$ is defined as the maximum unramified abelian extension of $ K $ such that the primes in $S_{\infty} $ are totally decomposed. Using this definition, a suitable concept of genus field can be given, as in the case of number fields. It was R. Clement \cite{clement1992genus} the first to consider the extended genus field of a cyclic extension $K$ of the field of rational functions $ k =\mathbb{F}_{q}(T) $ of degree a prime number $ l $ dividing $ q -1 $. Using class field theory she found the genus field of $K$. S. Bae and J.K. Koo \cite{bae1996genus} generalized Clement's result using Fröhlich's methods.

G. Peng \cite{peng2003genus} described explicitly the genus field of a cyclic Kummer extension of prime degree of a congruence function field using Rosen's definitions of Hilbert class field. Later on, S. Hu and Y. Li \cite{hu2010genus} described the genus field of an Artin-Schreier extension of a field of rational functions.

In \cite{MALDONADORAMIREZ201340} and \cite{MALDONADORAMIREZ2015283}, the theory of genus fields was developed for a congruence funtion field using Rosen's definition of Hilbert class field. The method used is based on Leopoldt's ideas using Dirichlet's characters when $K$ is contained in a cyclotomic funtion field. If $K$ is not contained in a cyclotomic extension and $\mathfrak{p}_{\infty}$ is tamely ramified, we consider a suitable extension of constants of $K$ and then proceed as before to find $K_{\mathfrak{ge}}$. The method can be used as an application to find explicitly $ K_{\mathfrak{ge}} $ of an extension $K/k$ of prime degree extension that divides $q-1$ (Kummer) or $ l=p $, where $ p $ is the characteristic (Artin- Schreier) and also when $ K / k $ is a $ p $-cyclic extension (Witt). Later on, this method was used in \cite{bautista2013genus} to describe $ K_{\mathfrak{ge}} $ explicitly when $ K / k $ is a cyclic extension of degree $l^{n} $, where $ l $ is a prime number and $ l^{n} \big| q-1$ under several restrictions.
In \cite{maldonado2017genus} the genus field of a finite and separable extension $K$ of $ k=\mathbb{F}_{q} (T) $ is described as $K_{\mathfrak{ge}}=Kk^{*} $ where $k^{*} $ is the composition of two fields $k_{1} $ and $ k_{2} $ following the method of Ishida \cite{ishida2006genus}.

The study of the genus fields of abelian extensions of $k$ has been considered recently in \cite{barreto2018genus}, \cite{bautista2013genus}, \cite{MALDONADORAMIREZ201340}, \cite{MALDONADORAMIREZ2015283}. In \cite{barreto2018genus} the genus field $ K_ {ge} $ of a finite abelian extension $ K $ of $ k =\mathbb{F}_{q}(T) $ is described, in a much more transparent way than in \cite{MALDONADORAMIREZ201340}.

In the present work we characterize the genus field and the extended genus field of an elementary abelian extension $ K $ of degree $ l^{m} $ of a rational congruence function field  $ k =\mathbb{F}_{q}(T) $, where $ K = K_{1} \cdots K_{m} $ and $ \textrm {Gal} (K / k) \cong C_{l}^{m} $, where $ C_{l} $ denotes the cyclic group of order $ l $. Using the ideas obtained in \cite{barreto2018genus}, the genus field is described when $ l \big| (q-1) $ and when $l\nmid (q-1)$. The main result of this work describes the genus field and the extended genus field of $K$ explicitly. The proof of the main result is constructive beginning with the study of the genus field of a cyclic extension $ K / k $ of degree $l$; this is Peng's result \cite{peng2003genus}. We generalize Peng's results to an elementary abelian $l$-extension of degree $l^{m}$.

B. Anglès and J.-F. Jaulent presented in \cite{angles2000theorie} the general theory of the extended genus field of global fields, either function or numeric. E. Ramírez, M. Rzedowski and G. Villa described in \cite{ramirezramirez2019genus} the extended genus field using a different concept from the one defined by Anglès and Jaulent.

In this paper we are interested in describing the extended genus field for an elementary abelian $l$-extension of degree $l^{m}$. We use the class field theory approach of genus theory given by Anglès and Jaulent and its connection with Dirichlet characters given M. Rzedowski and G. Villa in \cite{rzedowskicalderon2021class}, where it is shown that if $K$ is contained in a cyclotomic function field, then $K_{\mathfrak{gex}}$ is the same field for both approaches. Furthermore, in our situation, the elementary abelian $l$-extensions, the extended genus field obtained by means of the Angl\`es-Jaulent approach is the same that the one given by Ramírez, Rzedowski and Villa. For more general extensions the extended genus fields might be a different.
\section{Preliminaries and notation}
\hfill\break
We use the following notation. Let $k=\mathbb{F}_{q}(T)$ be a rational congruence funtion field, $\mathbb{F}_{q}$ denoting the finite field of $q$ elements. Let $R_{T}=\mathbb{F}_{q}[T]$ be the ring of polynomials, that is, $R_{T}$ is the ring of integers of $k$ and let $R_{T}^{+}\coloneqq \{P\in R_{T}\;|\;P\; \text{is monic and irreducible} \}$. The elements of $R_{T}^{+}$ are the \textit{finite primes} of $k$ and $\mathfrak{p}_{\infty}=\infty$, the pole of the principal divisor $(T)$ in $k$, is the \textit{infinite prime}.  For $N\in R_{T}\setminus\{0\}$, $\Lambda_{N}$ denotes the $N$-torsion of the Carlitz module and $k(\Lambda_{N})$ denotes the $N$-th cyclotomic funtion field. The $R_{T}$-module $\Lambda_{N}$ is cyclic and $\lambda_{N}$ denotes a generator of $\Lambda_{N}$ as $R_{T}$-module.

For a finite Galois extension $L/K$ of function fields, if $\mathfrak{p}$ is a place of $K$ and $\mathfrak{P}$ is a place above $\mathfrak{p}$, the decomposition and the inertia groups are denoted by $D_{\mathfrak{P}/\mathfrak{p}}(L|K)$ and $I_{\mathfrak{P}/\mathfrak{p}}(L|K)$ respectively. For any finite extension $K/k$ we will use the symbol $S_{\infty}(K)$ to denote either one prime or the set of all primes in $K$ above $\mathfrak{p}_{\infty}$. Given a cyclotomic function field, a \textit{Dirichlet character} is a group homomorphism $\chi:(R_{T}/(N))^{*}\longrightarrow \mathbb{C}^{*}$ and we define the conductor $F_{\chi}$ of $\chi$, as the monic polynomial of minimum degree such that $\chi$ can be defined modulo $F_{\chi}$, $\chi:(R_{T}/(F_{\chi}))^{*}\longrightarrow \mathbb{C}^{*}$. 
\theoremstyle{definition}
\begin{definition}
Let $ X $ be any group of Dirichlet characters. Let $ N $ be the least common multiple of $ \{F_{\chi} \; | \; \chi \in X \} $. Thus $ X $ is a subgroup of $ \widehat{G_{N}} $, where $G_{N}=\mathrm{Gal}(k(\Lambda_{N})/k)$. Let $ H = \bigcap_{\chi \in X} \ker \chi $ and $ k_{X}\coloneqq k (\Lambda_{N})^{H} $. Then $ k_{X} $ is called \emph{the field that belongs to $ X $ or field associated with} $ X $.
\end{definition}

For any character $\chi$, let $\chi=\prod_{P\in R_{T}^{+}}\chi_{P}$ be the canonical decomposition. We have that $\chi_{P}$ has conductor a power of $P$. We also have that $F_{\chi}=\prod_{P\in R_{T}^{+}}F_{\chi_{P}}$. If $X$ is a group of Dirichlet characters, we write $X_{P}\coloneqq \{\chi_{P}\;|\;\chi\in X\}$ for $P\in R_{T}^{+}$. If $K$ is any extension of $k$, $k\subseteq K\subseteq k(\Lambda_{N})$ and $P\in R_{T}^{+}$, then the ramification index of $P$ in $K$ is $e_{P}(K|k)=|X_{P}|$.

The ramification index of $\mathfrak{p}_{\infty}$ in $k(\Lambda_{N})/k$ is $e_{\infty}=q-1$ and $\mathfrak{p}_{\infty}$ decomposes in $h_{\infty}=\frac{|G_{N}|}{q-1}$ different prime divisors of $k(\Lambda_{N})$ of degree $1$. We have $G_{N}\cong (R_{T}/(N))^{*}$ and let $\mathfrak{J}$ be the inertia group of $\mathfrak{p}_{\infty}$. Then $\mathfrak{J}=\mathbb{F}_{q}^{*}\subseteq G_{N}$, that is, $\mathfrak{J}=\{\sigma_{a}\:|\: a\in\mathbb{F}_{q}^{*} \}$. Furthermore, the inertia and decomposition groups of $\mathfrak{p}_{\infty}$ coincide. In $k(\Lambda_{N})/k$,  $\mathfrak{p}_{\infty}$ (if $q>2$) and the polynomials $P\in R_{T}^{+}$ such that $P|N$ are the ramified primes.

Let $S_{\infty}$ be any nonempty finite set of prime divisors of $K$. The \textit{Hilbert class funtion field of} $K$ relative to $S_{\infty}$, $K_{H,S_{\infty}}$ is the maximum unramified abelian extension of $K$ where every element of $S_{\infty}$ decomposes fully. This definition is due to Rosen \cite{rosen1987hilbert}. In this paper, we consider $S_{\infty}$ as the set of prime divisors dividing $\mathfrak{p}_{\infty}$, the pole of $T$ in $k$ and we denote $K_{H}=K_{H,S_{\infty}}$.

\begin{definition}
Let $K$ be a finite geometric extension of $k$, that is, the exact field of constants of $K$ is $\mathbb{F}_{q}$. The genus field $K_{\mathfrak{ge}}$ of $K$ is the maximum extension of $K$ contained in $K_{H}$ that is the composite of $K$ and an abelian extension of $k$. Equivalently, $K_{\mathfrak{ge}}=Kk^{*}$ where $k^{*}$ is the maximum abelian extension of $k$ contained in $K_{H}$.
\end{definition}

Let $K/k$ be a finite abelian extension. From the Kronecker-Weber Theorem, we have that there exists $n,m_{0}\in \mathbb{N}$ and $N\in R_{T}$ such that
\begin{equation*}
    K\subseteq {}_{n}k(\Lambda_{N})_{m_{0}}\coloneqq L_{n} k(\Lambda_{N}) \mathbb{F}_{q^{m_{0}}},
\end{equation*}
where $L_{n}$ denotes the subfield of $k(\Lambda_{1/T^{n+1}})$ of degree $q^{n}$ and $k_{m_{0}}=\mathbb{F}_{q^{m}_{0}}(T)$ is the extension of constants of $k$ of degree $m_{0}$. We have that $\mathfrak{p}_{\infty}$ is totally and wildly ramified in $L_{n}/k$. We also have that $\mathfrak{p}_{\infty}$ is totally inert in $k_{m_{0}}/k$.
\begin{pro}\label{pro:2.3}
If $K/k$ is an abelian extension such that $\mathfrak{p}_{\infty}$ is tamely ramified, then there exist $N\in R_{T}$ and $m_{0}\in\mathbb{N} $ such that $K\subseteq k(\Lambda_{N})\mathbb{F}_{q^{m_{0}}}$. 
\end{pro}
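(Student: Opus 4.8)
The plan is to start from the Kronecker--Weber theorem for function fields quoted just before the statement, namely that there exist $n,m_{0}\in\mathbb{N}$ and $N\in R_{T}$ with $K\subseteq {}_{n}k(\Lambda_{N})_{m_{0}}=L_{n}k(\Lambda_{N})\mathbb{F}_{q^{m_{0}}}$, and then use the tame ramification hypothesis at $\mathfrak{p}_{\infty}$ to kill the factor $L_{n}$. The point is that the only source of wild ramification at $\mathfrak{p}_{\infty}$ in ${}_{n}k(\Lambda_{N})_{m_{0}}$ is $L_{n}/k$: we are told $\mathfrak{p}_{\infty}$ is totally and wildly ramified in $L_{n}/k$, while it is tamely (indeed with index dividing $q-1$) ramified in $k(\Lambda_{N})/k$ and unramified (inert) in $\mathbb{F}_{q^{m_{0}}}$. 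So first I would set $M={}_{n}k(\Lambda_{N})_{m_{0}}$, let $\mathfrak{P}_{\infty}$ be a prime of $M$ over $\mathfrak{p}_{\infty}$, and consider the inertia group $I=I_{\mathfrak{P}_{\infty}/\mathfrak{p}_{\infty}}(M|k)\subseteq\mathrm{Gal}(M/k)$ together with its wild (i.e. $p$-Sylow) part $I_{w}$, where $p=\mathrm{char}\,k$.

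Next I would identify the fixed field $M^{I_w}$. Since $\mathfrak{p}_{\infty}$ is tame in $k(\Lambda_N)\mathbb{F}_{q^{m_0}}/k$, the subgroup $\mathrm{Gal}(M/k(\Lambda_N)\mathbb{F}_{q^{m_0}})$ contains $I_w$; conversely, because $L_n/k$ is totally wildly ramified at $\mathfrak{p}_\infty$ with group a $p$-group, restriction of $I_w$ to $\mathrm{Gal}(L_n/k)$ is surjective, so $M^{I_w}$ is contained in $k(\Lambda_N)\mathbb{F}_{q^{m_0}}$ and in fact (comparing degrees, or by the compositum structure $M=L_n\cdot k(\Lambda_N)\mathbb{F}_{q^{m_0}}$ with $L_n$ linearly disjoint over $k$ from a field in which $\mathfrak{p}_\infty$ is tame) one gets $M^{I_w}=k(\Lambda_N)\mathbb{F}_{q^{m_0}}$. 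Then, since $K/k$ is tame at $\mathfrak{p}_{\infty}$ by hypothesis, the image of $I_w$ in $\mathrm{Gal}(K/k)$ is a $p$-subgroup of a group with no wild inertia at $\mathfrak{p}_\infty$ — but more simply: the inertia of $\mathfrak{p}_\infty$ in $K/k$ has order prime to $p$, so the wild inertia $I_w$ of $M/k$ restricts trivially to $\mathrm{Gal}(M/K)^{c}$... concretely, $K\subseteq M^{I_w}$ because the ramification index of $\mathfrak{p}_{\infty}$ in $KM^{I_w}/M^{I_w}$ would otherwise be divisible by $p$ while it divides the tame index $e_{\mathfrak{p}_\infty}(K/k)$. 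Hence $K\subseteq M^{I_w}=k(\Lambda_N)\mathbb{F}_{q^{m_0}}$, which is the desired conclusion.

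I would present the linear-disjointness input cleanly: $L_n\cap k(\Lambda_N)\mathbb{F}_{q^{m_0}}=k$ because any common subfield is simultaneously wildly-totally-ramified at $\mathfrak{p}_\infty$ (being inside $L_n$) and tame at $\mathfrak{p}_\infty$ (being inside $k(\Lambda_N)\mathbb{F}_{q^{m_0}}$), forcing it to be unramified there, and the only subfield of $L_n$ unramified at $\mathfrak{p}_\infty$ is $k$ itself since $\mathfrak{p}_\infty$ is totally ramified in $L_n/k$. Consequently $\mathrm{Gal}(M/k)\cong\mathrm{Gal}(L_n/k)\times\mathrm{Gal}(k(\Lambda_N)\mathbb{F}_{q^{m_0}}/k)$, $I_w$ is exactly the first factor (all of $\mathrm{Gal}(L_n/k)$, which is a $p$-group, together with the trivial contribution from the tame factor), and $M^{I_w}=k(\Lambda_N)\mathbb{F}_{q^{m_0}}$ on the nose.

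The main obstacle is the careful bookkeeping of inertia groups in the compositum $M$: one must argue that the wild part of the inertia at $\mathfrak{p}_\infty$ in $M/k$ sees only the $L_n$-direction, and that $K$, being tame at $\mathfrak{p}_\infty$, is annihilated by it. Everything else — Kronecker--Weber, the ramification behaviour of $\mathfrak{p}_\infty$ in $L_n$, in $k(\Lambda_N)$ (index $q-1$, hence prime to $p$), and in $\mathbb{F}_{q^{m_0}}$ (inert, unramified) — is quoted or standard. One should also note the harmless point that enlarging $N$ or $m_0$ is allowed, so there is no need to optimize; it suffices to produce \emph{some} such $N$ and $m_0$, which is exactly what taking $M^{I_w}$ delivers.
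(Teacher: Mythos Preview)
Your argument is correct. The paper does not actually prove this proposition; it simply cites \cite[Proposition 3.4]{MALDONADORAMIREZ201340}, so there is no in-paper proof to compare against. What you have supplied is a valid self-contained proof: Kronecker--Weber gives $K\subseteq M=L_{n}k(\Lambda_{N})\mathbb{F}_{q^{m_{0}}}$; the linear-disjointness $L_{n}\cap k(\Lambda_{N})\mathbb{F}_{q^{m_{0}}}=k$ follows exactly as you say (a common subfield is simultaneously totally $p$-power ramified and tame at $\mathfrak{p}_{\infty}$, hence trivial); from the product decomposition $\mathrm{Gal}(M/k)\cong\mathrm{Gal}(L_{n}/k)\times\mathrm{Gal}(k(\Lambda_{N})\mathbb{F}_{q^{m_{0}}}/k)$ the wild inertia $I_{w}$ at $\mathfrak{p}_{\infty}$ projects trivially to the second factor (which has prime-to-$p$ inertia) and surjectively to the first (which is a $p$-group equal to its own inertia), so $I_{w}=\mathrm{Gal}(L_{n}/k)\times\{1\}$ and $M^{I_{w}}=k(\Lambda_{N})\mathbb{F}_{q^{m_{0}}}$; finally, tameness of $K/k$ at $\mathfrak{p}_{\infty}$ means the image of $I_{w}$ in $\mathrm{Gal}(K/k)$ is trivial, i.e.\ $I_{w}\subseteq\mathrm{Gal}(M/K)$, whence $K\subseteq M^{I_{w}}$.

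One cosmetic remark: the sentence ``restricts trivially to $\mathrm{Gal}(M/K)^{c}$'' is garbled and should be cleaned up; the intended statement is simply that $I_{w}$ lies in $\mathrm{Gal}(M/K)$ because its image in $\mathrm{Gal}(K/k)$ (which is the wild inertia of $K/k$ at $\mathfrak{p}_{\infty}$) vanishes. With that phrasing fixed, the proof is complete.
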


\begin{proof}
See \cite[Proposition 3.4]{MALDONADORAMIREZ201340}.
\end{proof}

\begin{definition}
A Kummer extension is an extension $ K / k $ such that $ K = k (\sqrt[n]{\Delta})$, where $ n \in \mathbb{N} $ is relatively prime to the characteristic of $ k $, that $\mu_{n}\subseteq k$ where $\mu_{n}$ is the group of the $ n $-th roots of the unit, $\Delta$ is a subgroup of $k^{*}$ containing $(k^{*})^{n}$, where $k^{*}$ denotes the multiplicative group of the field $k$, and $k(\sqrt[n]{\Delta})$ is the field generated by all the roots $\sqrt[n]{a}$ with $a\in \Delta$.
\end{definition}

\begin{pro} \label{pro: 2.5}
Let $\gamma\in \mathbb{F}_{q}^{*}$ and $D\in R_{T}$ a monic polynomial. The behavior of $ \mathfrak{p}_{\infty} $ in $ k(\sqrt[l]{\gamma D})/k $ is as follows:
\begin{enumerate}
     \item if $ l \nmid \deg \: D $, $ \mathfrak{p}_{\infty} $ is fully ramified.
     \item if $ l | \deg \: D $ and $ \gamma \in (\mathbb{F}_{q}^{*})^{l} $, $ \mathfrak{p}_{\infty} $ is completely decomposed.
     \item if $ l | \deg \: D $ and $ \gamma \notin (\mathbb{F}_{q}^{*})^{l} $, $ \mathfrak{p}_{\infty} $ is totally inert.
    
\end{enumerate}
\end{pro}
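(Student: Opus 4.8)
The plan is to pass to the completion $k_{\infty}=\mathbb{F}_{q}((1/T))$ of $k$ at $\mathfrak{p}_{\infty}$, whose uniformizer is $\pi=1/T$ and whose residue field is $\mathbb{F}_{q}$; recall that $l\mid q-1$, so $\mu_{l}\subseteq\mathbb{F}_{q}$ and $(\mathbb{F}_{q}^{*})^{l}$ is the unique subgroup of index $l$ of $\mathbb{F}_{q}^{*}$. The basic observations are that $v_{\infty}(\gamma D)=-\deg D$ and that, since $D$ is monic, $\gamma D=\gamma\,\pi^{-\deg D}\,u$ with $u\in 1+\mathfrak{m}_{\infty}$ (the residue of $D/T^{\deg D}$ at $\mathfrak{p}_{\infty}$ equals $1$). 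Because $[k(\sqrt[l]{\gamma D}):k]\mid l$ and $l$ is prime, the ramification index $e$ of $\mathfrak{p}_{\infty}$ satisfies $e\in\{1,l\}$, so it suffices to pin down $e$ and the residue degree $f$; the three possibilities ($e=l$; $e=f=1$; $f=l$) then give exactly the three asserted cases.

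For (1), suppose $l\nmid\deg D$. Then $\gamma D\notin(k^{*})^{l}$ (its valuation at $\mathfrak{p}_{\infty}$ is not divisible by $l$), so $[K:k]=l$ where $K=k(\sqrt[l]{\gamma D})$. Extending $v_{\infty}$ to a prime $\mathfrak{P}$ of $K$ with ramification index $e$ and writing $w$ for a root of $X^{l}-\gamma D$, we get $l\,v_{\mathfrak{P}}(w)=v_{\mathfrak{P}}(\gamma D)=e\,v_{\infty}(\gamma D)=-e\deg D$, hence $l\mid e\deg D$; as $l$ is prime and $l\nmid\deg D$, this forces $e=l$, i.e. $\mathfrak{p}_{\infty}$ is totally ramified. (No hypothesis on $\gamma$ is used here.)

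For (2) and (3), suppose $l\mid\deg D$ and put $w:=\gamma D/T^{\deg D}$, so that $\gamma D=(T^{\deg D/l})^{l}\,w$ and therefore $k(\sqrt[l]{\gamma D})=k(\sqrt[l]{w})$. Over $k_{\infty}$ we have $w=\gamma\,u$ with $u\in 1+\mathfrak{m}_{\infty}$; since $l\neq p$, where $p$ is the characteristic of $k$, the abelian pro-$p$ group $1+\mathfrak{m}_{\infty}$ is uniquely $l$-divisible, so $u$ is an $l$-th power in $k_{\infty}$ and $k_{\infty}(\sqrt[l]{w})=k_{\infty}(\sqrt[l]{\gamma})$. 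Now I distinguish according to $\gamma$. If $\gamma\in(\mathbb{F}_{q}^{*})^{l}$, then $X^{l}-\gamma$ splits completely over $\mathbb{F}_{q}$ (using $\mu_{l}\subseteq\mathbb{F}_{q}$), hence over $k_{\infty}$, so $\mathfrak{p}_{\infty}$ decomposes fully in $K$. If $\gamma\notin(\mathbb{F}_{q}^{*})^{l}$, then $l$ being prime forces $X^{l}-\gamma$ to be irreducible over $\mathbb{F}_{q}$, so $k_{\infty}(\sqrt[l]{\gamma})/k_{\infty}$ is the unramified extension of residue degree $l$, and $\mathfrak{p}_{\infty}$ is totally inert in $K$.

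The only inputs that are not purely formal are the unique $l$-divisibility of the principal units when $l\neq p$ (a routine consequence of $1+\mathfrak{m}_{\infty}$ being an abelian pro-$p$ group, hence a $\mathbb{Z}_{p}$-module on which $l$ acts invertibly) and the classical irreducibility criterion for $X^{l}-\gamma$ over a field with no $l$-th root of $\gamma$. I expect the only point needing genuine care to be keeping the dichotomy $l\mid\deg D$ versus $l\nmid\deg D$ straight, and I note that the degenerate situation in which $\gamma D$ is already an $l$-th power in $k^{*}$ (which forces $[K:k]<l$) occurs precisely inside case (2) and is consistent with statement (2) read trivially.
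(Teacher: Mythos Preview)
Your argument is correct. The paper does not supply its own proof of this proposition; it simply refers the reader to \cite[Lemma~3]{peng2003genus}. Your route---passing to the completion $k_{\infty}=\mathbb{F}_{q}((1/T))$, writing $\gamma D=\gamma\,\pi^{-\deg D}u$ with $u\in 1+\mathfrak{m}_{\infty}$, and using that the principal units are uniquely $l$-divisible when $l\neq p$---is the standard local computation, and in fact the paper invokes exactly these ingredients later (in the determination of the constant field of $K_{j_{H^{+}}}$ in the extended genus field discussion), so your proof meshes naturally with the surrounding text.
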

\begin{proof}
See \cite[Lemma 3]{peng2003genus}.
\end{proof}

Next result describes the genus field of a finite abelian extension $ K / k $.
\begin{teo}\label{teo:4.1.3}
Let $ K $ be a finite abelian geometric extension of $ k $ where $ \mathfrak{p}_{\infty} $ is tamely ramified and $K\subseteq k(\Lambda_{N})\mathbb{F}_{q^{m_{0}}}$. Let
\begin{equation*}
    E \coloneqq K \mathbb {F}_{q^{m_{0}}}(T) \cap k(\Lambda_{N}).
\end{equation*}
Then
\begin{equation}
    K_{\mathfrak{ge}} = E_{\mathfrak{ge}}^{H_{1}} K =(E_{\mathfrak{ge}} K)^{H}
\end{equation}
where $ H $ is the decomposition group of any prime of $ S_{\infty}(K) $, the set of primes of $ K $ above $ \mathfrak{p}_{\infty} $, in $ E_{\mathfrak{ge}} K / K $, $ H_{1} \coloneqq H|_{E_{\mathfrak{ge}}} $ and $ H_{2} \coloneqq H_{1}|_{E} $. 
Let $ d \coloneqq f_{\infty} (EK| K) $. We have $ H \cong H_{1} \cong H_{2} \cong C_{d} $ and $ d\big| q-1 $. We also have $E_{\mathfrak{ge}}K / K_{\mathfrak{ge}}$ and $EK / E^{H_{2}}K$ are extensions of constants of degree $d$. Finally, the constant field of $ K_{\mathfrak{ge}} $ is $ \mathbb{F}_{q^{t}} $, where $ t $ is the degree of any element of $ S_{\infty} $.
\end{teo}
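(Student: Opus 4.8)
Our plan is to reduce the statement to the cyclotomic case, i.e.\ to the field $E$, which is a geometric abelian subextension of $k(\Lambda_{N})$ whose genus field $E_{\mathfrak{ge}}$ is already described (through the Leopoldt/Dirichlet-character construction) and is abelian over $k$; note that since $\mathfrak{p}_{\infty}$ has residue degree $1$ in $k(\Lambda_{N})/k$, both $E$ and $E_{\mathfrak{ge}}$ are automatically geometric. First we would fix the tower $k\subseteq E\subseteq EK\subseteq E_{\mathfrak{ge}}K$. From $K\subseteq k(\Lambda_{N})\mathbb{F}_{q^{m_{0}}}$ one checks $K\mathbb{F}_{q^{m_{0}}}=E\mathbb{F}_{q^{m_{0}}}$ and $[E:k]=[K:k]$ (the latter because $K$ is geometric, so $K\cap\mathbb{F}_{q^{m_{0}}}(T)=k$), whence $EK=E\mathbb{F}_{q^{m_{1}}}=K\mathbb{F}_{q^{m_{1}}}$ for some $m_{1}\mid m_{0}$ and both $EK/E$, $EK/K$ are constant extensions. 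Since $E_{\mathfrak{ge}}/E$ is unramified at every prime (it lies in $E_{H}$), so is its base change $E_{\mathfrak{ge}}K/EK$; and $EK/K$ is constant, so $E_{\mathfrak{ge}}K/K$ is everywhere unramified, and it is abelian over $k$ because $E_{\mathfrak{ge}}/k$ is.

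Next we would treat the infinite prime. Because $S_{\infty}(E)$ splits completely in $E_{\mathfrak{ge}}/E$ (the defining property of $E_{H}\supseteq E_{\mathfrak{ge}}$), the same holds after base change in $E_{\mathfrak{ge}}K/EK$, so $f_{\infty}(E_{\mathfrak{ge}}K|K)=f_{\infty}(EK|K)=d$; and $E_{\mathfrak{ge}}K/K$ being everywhere unramified, the decomposition group $H$ of a prime of $S_{\infty}(K)$ in $E_{\mathfrak{ge}}K/K$ is cyclic of order $d$, generated by a Frobenius. The restriction $H\to H_{1}$ is an isomorphism (because $\mathrm{Gal}(E_{\mathfrak{ge}}K/K)\to\mathrm{Gal}(E_{\mathfrak{ge}}/E_{\mathfrak{ge}}\cap K)$ is), and $H\to H_{2}$ is injective because its kernel is the decomposition group of a prime over $\mathfrak{p}_{\infty}$ in $E_{\mathfrak{ge}}K/EK$, which is trivial as $S_{\infty}(EK)$ splits completely there; hence $H\cong H_{1}\cong H_{2}\cong C_{d}$. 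For $d\mid q-1$, restrict $H$ once more to $E$: its image is isomorphic to $H$ and lies inside the decomposition group of $\mathfrak{p}_{\infty}$ in $E/(E\cap K)$, which has order dividing $q-1$ because $E\subseteq k(\Lambda_{N})$, where $\mathfrak{p}_{\infty}$ has residue degree $1$ and inertia of order dividing $q-1$.

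For the main equalities, $(E_{\mathfrak{ge}}K)^{H}=E_{\mathfrak{ge}}^{H_{1}}K$ is simply the Galois correspondence for $E_{\mathfrak{ge}}K/K$ read through the isomorphism $\mathrm{Gal}(E_{\mathfrak{ge}}K/K)\cong\mathrm{Gal}(E_{\mathfrak{ge}}/E_{\mathfrak{ge}}\cap K)$ (under which $H$ corresponds to $H_{1}$). Then $(E_{\mathfrak{ge}}K)^{H}\subseteq K_{\mathfrak{ge}}$ is immediate: $(E_{\mathfrak{ge}}K)^{H}=K\,E_{\mathfrak{ge}}^{H_{1}}$ with $E_{\mathfrak{ge}}^{H_{1}}/k$ abelian (a subfield of the abelian $E_{\mathfrak{ge}}/k$), it is unramified over $K$, and every prime of $S_{\infty}(K)$ splits completely in it — in the abelian extension $E_{\mathfrak{ge}}K/K$ these primes all have decomposition group $H$, so the fixed field of $H$ is the largest subextension in which they all split — hence $(E_{\mathfrak{ge}}K)^{H}\subseteq K_{H}$, and being $K$ composed with an abelian extension of $k$ it lies in $K_{\mathfrak{ge}}$. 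For the reverse inclusion it is enough to show $K_{\mathfrak{ge}}\subseteq E_{\mathfrak{ge}}K$, since then $S_{\infty}(K)$-splitting forces $K_{\mathfrak{ge}}\subseteq(E_{\mathfrak{ge}}K)^{H}$. Write $K_{\mathfrak{ge}}=Kk^{*}$ with $k^{*}$ maximal abelian over $k$ inside $K_{H}$; as $K_{H}/K$ is unramified, $\mathfrak{p}_{\infty}$ is tame in $k^{*}/k$, so by Proposition~\ref{pro:2.3}, $K_{\mathfrak{ge}}\subseteq k(\Lambda_{N'})\mathbb{F}_{q^{m_{0}}}$ for a suitable multiple $N'$ of $N$. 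Put $E'\coloneqq K_{\mathfrak{ge}}\mathbb{F}_{q^{m_{0}}}\cap k(\Lambda_{N'})$: then $E\subseteq E'$, $E'/k$ is abelian, and one checks $E'/E$ is unramified everywhere with $S_{\infty}(E)$ totally split, so $E'\subseteq E_{H}$ and hence $E'\subseteq E_{\mathfrak{ge}}$; a comparison of constant fields (using that $K_{\mathfrak{ge}}$ has constant field exactly $\mathbb{F}_{q^{t}}$ with $t=f_{\infty}(K|k)$) then gives $K_{\mathfrak{ge}}\subseteq E'K\subseteq E_{\mathfrak{ge}}K$.

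The remaining assertions are bookkeeping with constant extensions. The group $\mathrm{Gal}(E_{\mathfrak{ge}}K/K_{\mathfrak{ge}})=H$ is cyclic of order $d$; since $E_{\mathfrak{ge}}K/K$ carries no ramification and $K_{\mathfrak{ge}}$ is exactly the locus where $S_{\infty}(K)$ splits, $E_{\mathfrak{ge}}K/K_{\mathfrak{ge}}$ must be the constant extension of degree $d$ (its constant field is $\mathbb{F}_{q^{td}}$, that of $K_{\mathfrak{ge}}$ is $\mathbb{F}_{q^{t}}$), and the same reasoning applied to $E$ gives $EK/E^{H_{2}}K$ constant of degree $d$. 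Finally the constant field of $K_{\mathfrak{ge}}$ is $\mathbb{F}_{q^{t}}$: $K\mathbb{F}_{q^{t}}/K$ is everywhere unramified, abelian over $k$, and $S_{\infty}(K)$ splits completely in it (degree-$t$ primes split in a degree-$t$ constant extension), so $K\mathbb{F}_{q^{t}}\subseteq K_{\mathfrak{ge}}$, while any constant subextension $K\mathbb{F}_{q^{r}}$ of $K_{H}$ must have $S_{\infty}(K)$ split, which forces $r\mid t$. The main obstacle is the reverse inclusion $K_{\mathfrak{ge}}\subseteq E_{\mathfrak{ge}}K$: verifying that $E'$ is unramified over $E$ with $S_{\infty}(E)$ totally decomposed, and carrying out the constant-field comparison so as to land inside $E_{\mathfrak{ge}}K$ rather than merely inside $E_{\mathfrak{ge}}\mathbb{F}_{q^{m_{0}}}$. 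Unramifiedness at the finite primes is routine (constant twists are unramified and preserve ramification indices), but $\mathfrak{p}_{\infty}$ is delicate — its residue degree interacts with the constant-extension degrees through the greatest common divisors of the splitting law, and that interaction is exactly what produces the cyclic $H$ of order $d\mid q-1$ and forces passage to the decomposition field; making this precise, together with the three restriction isomorphisms $H\cong H_{1}\cong H_{2}$, is the technical heart of the proof.
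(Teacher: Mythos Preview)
The paper does not actually prove this theorem: its entire proof is the sentence ``See \cite[Theorem 2.2]{barreto2018genus}.'' So there is no in-paper argument to compare against; what you have written is an attempted reconstruction of the cited result. Your overall strategy --- pass from $K$ to the cyclotomic companion $E$, exploit that $E_{\mathfrak{ge}}$ is already understood via Dirichlet characters, and then descend by the decomposition group $H$ of $S_{\infty}(K)$ --- is the right one, and your treatment of the forward inclusion $(E_{\mathfrak{ge}}K)^{H}\subseteq K_{\mathfrak{ge}}$, the three restriction isomorphisms $H\cong H_{1}\cong H_{2}$, the divisibility $d\mid q-1$, and the independent determination of the constant field of $K_{\mathfrak{ge}}$ are all correct.

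The genuine gap is exactly where you locate it: the reverse inclusion $K_{\mathfrak{ge}}\subseteq E_{\mathfrak{ge}}K$. Two points in your sketch are incomplete. First, Proposition~\ref{pro:2.3} applied to $K_{\mathfrak{ge}}$ yields \emph{some} $m_{0}'$, not the original $m_{0}$; this is harmless because $E$ is independent of the choice of $N$ and $m_{0}$ (a fact you use implicitly but do not state). Second, and more seriously, your ``constant-field comparison'' only gets you $K_{\mathfrak{ge}}\subseteq E'\mathbb{F}_{q^{m_{0}}}\subseteq E_{\mathfrak{ge}}\mathbb{F}_{q^{m_{0}}}$, together with the fact that $K_{\mathfrak{ge}}$ has constant field $\mathbb{F}_{q^{t}}$. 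But $E_{\mathfrak{ge}}K=E_{\mathfrak{ge}}\mathbb{F}_{q^{m_{1}}}$ with $m_{1}=[EK:K]$, and passing from $E_{\mathfrak{ge}}\mathbb{F}_{q^{m_{0}}}$ down to $E_{\mathfrak{ge}}\mathbb{F}_{q^{m_{1}}}$ is \emph{not} automatic from constant-field containment alone: a geometric subfield of $k(\Lambda_{N'})\mathbb{F}_{q^{m_{0}}}$ need not lie in $k(\Lambda_{N'})$. What is missing is the identity $m_{1}=td$ (obtained by computing $f_{\infty}(EK|k)$ once through $K$ and once through $E$, using $f_{\infty}(E|k)=1$), together with a genuine argument that $K_{\mathfrak{ge}}$, which contains $K$ and has $S_{\infty}(K)$ totally split, cannot pick up any constant piece beyond $\mathbb{F}_{q^{m_{1}}}$ inside $E_{\mathfrak{ge}}\mathbb{F}_{q^{m_{0}}}$. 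This is precisely the content packaged in the paper as Theorem~\ref{teo:3.4} (also cited from \cite{barreto2018genus}), and without it your sketch does not close.
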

\begin{proof}
See \cite[Theorem 2.2]{barreto2018genus}.
\end{proof}

\subsection{Extended Genus Field}
\hfill\break
In this section we recall the concept of extended genus field  for a geometric extension of congruence function fields $K/k$ according to \cite{rzedowskicalderon2021class}, where it is obtained the extended genus field of an abelian extension of a rational function field following the definition of Angl{\`e}s and Jaulent \cite{angles2000theorie}. 
 
Let $k=\mathbb{F}_{q}(T)$ be a global rational function field. Let $\mathfrak{p}_{\infty}$ be the infinite prime of $k$, that is, the pole of $T$ and let $k_{\infty}\cong \mathbb{F}_{q}((\frac{1}{T}))$ be the completion of $k$ at $\mathfrak{p}_{\infty}$. For $x\in k^{*}_{\infty}$, $x$ can be written  uniquely as: 
\begin{equation*}
    x=\Big(\frac{1}{T}\Big)^{n_{x}}\lambda_{x}\varepsilon_{x} \hspace{0.3cm} \text{with} \hspace{0.3cm} n_{x}\in \mathbb{Z}, \hspace{0.3cm} \lambda_{x}\in \mathbb{F}_{q} \hspace{0.3cm} \text{and} \hspace{0.3cm} \varepsilon_{x}\in U_{\infty}^{(1)}
\end{equation*}
where  $U_{\infty}^{(1)}=1+\mathfrak{p}_{\infty}$ is the group of 1-th units of $k$.

\begin{definition}
The $\text{sign function}$ $\phi_{\infty}:k^{*}_{\infty}\longrightarrow \mathbb{F}_{q}^{*}$ is defined as $\phi_{\infty}(x)=\lambda_{x} $ for $x\in k_{\infty}^{*}$. The value $\phi_{\infty}(x)$ is called the “sign" of $x$.
\end{definition}
\begin{definition}
Let $L$ be a finite separable extension of $k_{\infty}$. We define the sign of $L^{*}$ by $\phi_{L}\coloneq \phi_{\infty}\circ N_{L/k_{\infty}}:L^{*}\longrightarrow \mathbb{F}_{q}^{*}$, where $N_{L/k_{\infty}}$ is the norm from $L$ to $k_{\infty}$.
\end{definition}
Let $\mathcal{O}_{L}=\{x\in L\;|\;v_{\mathfrak{p}}(x)\geq 0,\; \text{for all}\; \mathfrak{p}\nmid \mathfrak{p}_{\infty} \}$, $I_{L}$ the group of fractional ideals, $P_{L}=\{ (x)=x\mathcal{O}_{L}\;|\; x\in L^{*}\}$ and $C_{L}=I_{L}/P_{L}$. Let $L^{+}=\{x\in L^{*}\;|\; \phi_{L_{\mathfrak{p}}}(x)=1, \;\text{for all}\; \mathfrak{p}\big|\mathfrak{p}_{\infty} \}$ be the set of the totally positive elements, let $P_{L}^{+}=\{x\mathcal{O}_{L}\;|\; x\in L^{+}\}$ and $C_{L}^{+}=I_{L}/P_{L}^{+}$.

\begin{definition}
We define the following subgroups of the group of idèles $J_{L}$:
\begin{equation*}
    U_{L}\coloneq \prod_{v\in \mathcal{P}_{\infty}} L_{v}^{*} \times \prod_{v\notin \mathcal{P}_{\infty}} U_{L_{v}},
   \end{equation*}
   \begin{equation*}
        U_{L}^{+}\coloneq \prod_{v\in \mathcal{P}_{\infty}} \ker\phi_{L_{v}} \times \prod_{v\notin \mathcal{P}_{\infty}} U_{L_{v}}.
   \end{equation*}
\end{definition}
The groups $U_{L}L^{*}$ and $U_{L}^{+}L^{*}$ are open subgroups of $J_{L}$. By class field theory, we have the isomorphism
\begin{equation*}
    C_{L}\cong J_{L}/U_{L}L^{*} \hspace{0.3cm} \text{and} \hspace{0.3cm}  C_{L}^{+}\cong J_{L}/U_{L}^{+}L^{*}.
\end{equation*}
For $S=S_{\infty}(L)$ we denote by $L_{H}=L_{H,S}$ the Hilbert class field of the global function field $L$. By class field theory we have
\begin{equation*}
    \mathrm{Gal}(L_{H}/L)\cong  C_{L}\cong J_{L}/U_{L}L^{*}. 
\end{equation*}

\begin{definition}
Let $L_{H^{+}}=L_{H}^{ext}$ be the abelian extension of the global function field $L$ corresponding to the idèle subgroup $U_{L}^{+}L^{*}$ of $J_{L}$. The field $L_{H^{+}}$ is called the extended Hilbert class field of $L$ corresponding to $\mathcal{O}_{L}$.
\end{definition}
Thus, $L_{H^{+}}/L$ is an unramified extension at the finite prime divisors of $L$, $L_{H}\subseteq L_{H^{+}}$ and
\begin{equation*}
    \mathrm{Gal}(L_{H^{+}}/L)\cong J_{L}/U_{L}^{+}L^{*}\cong C_{L}^{+}.
\end{equation*}
The degree function $\deg :J_{L}\longrightarrow \mathbb{Z}$ is defined as $\deg (\tilde{\alpha})=\sum_{\mathfrak{p}\in \mathbb{P}_{L}}\deg(\mathfrak{p})\cdot v_{\mathfrak{p}}(\alpha_{\mathfrak{p}})$, where $\deg(\mathfrak{p})$ is the degree of $\mathfrak{p}$, $\mathbb{P}_{L}$ denotes the set of prime divisors of $L$. Next result is important for finding the extended genus field.
\begin{teo}\label{teo:2.10}

Let $B<\mathcal{C}_{L}$ be an open subgroup of finite index, where $\mathcal{C}_{L}$ is the idèle class group of $L$. Let $d\coloneq \min\{n\in \mathbb{N}\;|\;\text{there exist}\; \tilde{\mathbf{b}}\in B, \; \text{with}\;\deg \tilde{\mathbf{b}}=n\}$. Then, if $E$ is the field associated with $B$, namely $B=N_{E/L}\mathcal{C}_{E}$, where $\mathcal{C}_{E}$ is the idèle class group of $E$, the field of constants of $E$ is $\mathbb{F}_{q^{d}}$.
\end{teo}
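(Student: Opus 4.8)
The plan is to reduce the assertion to an explicit identification of the norm subgroups attached to the constant-field extensions of $L$. Since $L$ carries a prime divisor of degree one (a classical theorem of F.\,K.~Schmidt), the degree homomorphism $\deg\colon\mathcal{C}_L\to\mathbb{Z}$ is surjective; as $[\mathcal{C}_L:B]<\infty$, its image $\deg(B)$ is a finite-index subgroup of $\mathbb{Z}$, hence $\deg(B)=d\mathbb{Z}$ with $d$ exactly the integer in the statement. It then suffices to prove the equivalence: for every integer $m\ge 1$, the constant extension $L\mathbb{F}_{q^m}$ is contained in $E$ if and only if $m\mid d$. Granting this, write $\mathbb{F}_{q^e}$ for the exact field of constants of $E$. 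On one hand, $\{m\ge 1:\mathbb{F}_{q^m}\subseteq E\}$ is precisely the set of divisors of $e$; on the other, $\mathbb{F}_{q^m}\subseteq E$ is equivalent to $L\mathbb{F}_{q^m}\subseteq E$, hence by the equivalence to $m\mid d$. Thus $e$ and $d$ have the same divisors, so $e=d$, which is the claim.

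To establish the equivalence I would invoke class field theory over $L$. The extension $L_m\coloneqq L\mathbb{F}_{q^m}/L$ is abelian of degree $m$, unramified at every place of $L$, and restriction gives a canonical isomorphism $\mathrm{Gal}(L_m/L)\cong\mathrm{Gal}(\mathbb{F}_{q^m}/\mathbb{F}_q)$, cyclic of order $m$ generated by the Frobenius $x\mapsto x^q$. For an everywhere-unramified extension the local Artin map at a place $\mathfrak{p}$ sends a uniformizer to the Frobenius $\mathrm{Frob}_{\mathfrak{p}}$, which acts on $\mathbb{F}_{q^m}$ as the $\deg(\mathfrak{p})$-th power of $x\mapsto x^q$; composing the local maps, the global reciprocity map $\mathcal{C}_L\to\mathrm{Gal}(L_m/L)\cong\mathbb{Z}/m\mathbb{Z}$ carries an idèle class $\widetilde{\alpha}=(\alpha_{\mathfrak{p}})_{\mathfrak{p}}$ to $\sum_{\mathfrak{p}}\deg(\mathfrak{p})\,v_{\mathfrak{p}}(\alpha_{\mathfrak{p}})\bmod m=\deg(\widetilde{\alpha})\bmod m$ (well defined, since principal idèles have degree $0$). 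Consequently its kernel is $\deg^{-1}(m\mathbb{Z})$, and by the Artin reciprocity theorem this kernel is exactly the norm subgroup $N_{L_m/L}\mathcal{C}_{L_m}$.

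Finally, the order-reversing correspondence of class field theory yields $L_m\subseteq E$ if and only if $N_{E/L}\mathcal{C}_E\subseteq N_{L_m/L}\mathcal{C}_{L_m}$, that is $B\subseteq\deg^{-1}(m\mathbb{Z})$, that is $d\mathbb{Z}=\deg(B)\subseteq m\mathbb{Z}$, i.e. $m\mid d$; this is the equivalence used above, and the proof is complete. I expect the only genuinely substantive step to be the middle one: verifying that constant extensions are unramified everywhere and identifying their Artin map with the degree homomorphism, which is where the arithmetic of global function fields actually enters. Everything else is formal manipulation with the existence theorem and the subgroup–subfield dictionary; in particular, no knowledge of the structure of the finite group $\mathcal{C}_L^{0}$ of degree-zero idèle classes is required.
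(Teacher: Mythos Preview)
Your argument is correct and complete. The paper itself does not supply a proof here: it merely cites \cite[Teorema 17.8.6]{calderon2016campos}, so there is no in-text argument to compare against. What you have written is the standard class field theory computation --- identify the norm group of the constant extension $L_m=L\mathbb{F}_{q^m}$ as $\deg^{-1}(m\mathbb{Z})$ via the reciprocity map, then use the order-reversing correspondence $L_m\subseteq E\iff B\subseteq N_{L_m/L}\mathcal{C}_{L_m}$ to convert the question into a divisibility condition on $\deg(B)=d\mathbb{Z}$ --- and this is almost certainly the argument behind the cited reference as well.

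One small inaccuracy worth correcting: F.\,K.~Schmidt's theorem guarantees that every global function field possesses a \emph{divisor} of degree one, not a \emph{prime} divisor of degree one. A general $L$ need not have a degree-one place. This does not affect your proof, since a divisor of degree one already forces the degree map $\mathcal{C}_L\to\mathbb{Z}$ to be surjective (the idèle class group surjects onto the divisor class group, whose degree map is surjective by Schmidt), which is all you use. Just rephrase that sentence accordingly.
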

\begin{proof}
See \cite[Teorema 17.8.6]{calderon2016campos}.
\end{proof}
\begin{definition}
Let $L/K$ be a finite separable extension of global funtion fields. We define the extended genus field $L_{\mathfrak{gex},K}$ of $L$ with respect to $K$ as the maximum extension of $L$ contained in $L_{H^{+}}$ that is of the form $LK_{1,+}$ where $K_{1,+}/K$ is an abelian extension. The maximum field $K_{1,+}$ that satisfies  $L_{\mathfrak{gex},K}=LK_{1,K}$ will be denoted by $K_{L,+}$. In this
way, $K_{L,+}$ is the maximum extension of $K$ such that $L_{\mathfrak{gex},K}=LK_{L,+}$.
\end{definition}

Next theorem proves that the natural definition of extended genus field of a cyclotomic funtion field obtained by means of Dirichlet characters is the same as the one given by Angl{\`e}s and Jaulent.

\begin{teo}\label{teo:6.2}
Let $E\subseteq k(\Lambda_{N})$. Then the genus field $E_{\mathfrak{gex}}$ relative to $k$ is the field associated to the group of Dirichlet characters $Y=\prod_{P\in R_{T}} X_{P}$, where $X$ is the group of Dirichlet characters associated with the field $E$.
\end{teo}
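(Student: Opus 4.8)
The plan is to prove the two inclusions $k_{Y}\subseteq E_{\mathfrak{gex}}$ and $E_{\mathfrak{gex}}\subseteq k_{Y}$, using throughout that both fields are of the form ``abelian extension of $k$'' composed with $E$. Two elementary observations make $k_{Y}$ the natural candidate. Since each $\chi\in X$ factors as $\chi=\prod_{P}\chi_{P}$ with $\chi_{P}\in X_{P}$, we have $X\subseteq Y=\prod_{P}X_{P}$, hence $E=k_{X}\subseteq k_{Y}$; moreover $k_{Y}$, being a subfield of a cyclotomic function field, is a geometric abelian extension of $k$ in which $\mathfrak{p}_{\infty}$ is at most tamely ramified. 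Finally $Y_{P}=X_{P}$ for every finite prime $P$, so by the ramification dictionary $e_{P}(\,\cdot\mid k)=|(\,\cdot\,)_{P}|$ recalled in the preliminaries one gets $e_{P}(k_{Y}\mid k)=|Y_{P}|=|X_{P}|=e_{P}(E\mid k)$; multiplicativity in the tower $k\subseteq E\subseteq k_{Y}$ then shows that $k_{Y}/E$ is unramified at every finite prime.

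For $k_{Y}\subseteq E_{\mathfrak{gex}}$ it suffices to show $k_{Y}\subseteq E_{H^{+}}$, because $k_{Y}=E\,k_{Y}$ with $k_{Y}/k$ abelian. By the idèle-theoretic description of $E_{H^{+}}$ this amounts to $U_{E}^{+}E^{*}\subseteq N_{k_{Y}/E}\,\mathcal{C}_{k_{Y}}$, which I would check place by place. At a finite prime $v$ the extension $k_{Y}/E$ is unramified, so $U_{E_{v}}$ lies in the local norm group. The crux is a prime $v\mid\mathfrak{p}_{\infty}$, where one must see that $\ker\phi_{E_{v}}\subseteq N_{(k_{Y})_{w}/E_{v}}\bigl((k_{Y})_{w}^{*}\bigr)$. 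Here I would use that in a cyclotomic function field the decomposition group of $\mathfrak{p}_{\infty}$ equals its inertia group $\mathfrak{J}=\{\sigma_{a}\mid a\in\mathbb{F}_{q}^{*}\}\cong\mathbb{F}_{q}^{*}$, and that under local reciprocity at $\mathfrak{p}_{\infty}$ this identification is realised by the sign map; consequently the local reciprocity map $E_{v}^{*}\to\operatorname{Gal}\bigl((k_{Y})_{w}/E_{v}\bigr)$ factors through $\phi_{E_{v}}=\phi_{\infty}\circ N_{E_{v}/k_{\infty}}$, so $\ker\phi_{E_{v}}$ maps into the kernel, i.e.\ into the local norm group. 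Assembling the local statements gives $U_{E}^{+}\subseteq N_{k_{Y}/E}(J_{k_{Y}})$ and hence $k_{Y}\subseteq E_{H^{+}}$, so $k_{Y}\subseteq E_{\mathfrak{gex}}$.

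For the reverse inclusion, write $E_{\mathfrak{gex}}=E\,k_{E,+}$ with $k_{E,+}/k$ abelian and $E\,k_{E,+}\subseteq E_{H^{+}}$. I would first show that $k_{E,+}/k$ is tamely ramified at $\mathfrak{p}_{\infty}$ and geometric. Tameness follows because $E_{H^{+}}/E$ is unramified at the finite primes and its inertia at $\mathfrak{p}_{\infty}$ embeds into $\operatorname{Gal}(E_{H^{+}}/E_{H})\cong P_{E}/P_{E}^{+}$, whose order is prime to $p$; hence $E\,k_{E,+}/k$, and a fortiori $k_{E,+}/k$, is tame at $\mathfrak{p}_{\infty}$. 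Geometricity reduces, via Theorem~\ref{teo:2.10}, to the fact that $E_{H^{+}}$ is geometric over $E$, which in turn follows from the primes of $E$ above $\mathfrak{p}_{\infty}$ having degree $1$ together with the existence of a sign-one uniformizer at $\mathfrak{p}_{\infty}$ in $k(\Lambda_{N})$; I expect this to be the main technical obstacle and would isolate it as a lemma. Granting it, Proposition~\ref{pro:2.3} gives $k_{E,+}\subseteq k(\Lambda_{N'})$ for some $N'$, so with $M=\lcm(N,N')$ the field $E\,k_{E,+}=k_{XW}$ lies in $k(\Lambda_{M})$, where $W$ is the character group of $k_{E,+}$. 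Since $E\,k_{E,+}/E$ is unramified at every finite prime $P$, the dictionary yields $|X_{P}W_{P}|=|(XW)_{P}|=e_{P}(E\,k_{E,+}\mid k)=e_{P}(E\mid k)=|X_{P}|$, forcing $W_{P}\subseteq X_{P}$; therefore $W\subseteq\prod_{P}W_{P}\subseteq\prod_{P}X_{P}=Y$, whence $k_{E,+}=k_{W}\subseteq k_{Y}$ and $E_{\mathfrak{gex}}=E\,k_{E,+}\subseteq k_{Y}$.

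The two steps I expect to require the most care are the identification of local reciprocity at $\mathfrak{p}_{\infty}$ with the sign map (used to place $k_{Y}$ inside $E_{H^{+}}$) and the proof that $E_{H^{+}}$ carries no constant field extension over $E$; without the latter a constant extension of $k$ could enlarge $E_{\mathfrak{gex}}$ past the geometric field $k_{Y}$, so it is genuinely needed for the inclusion $E_{\mathfrak{gex}}\subseteq k_{Y}$.
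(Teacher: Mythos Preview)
The paper itself does not prove this theorem; it simply refers to \cite[Theorem 4.5]{rzedowskicalderon2021class}. So there is no in-paper argument to compare against, and your proposal must be judged on its own merits.

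Your forward inclusion $k_{Y}\subseteq E_{\mathfrak{gex}}$ is sound in outline. The only nontrivial input is the identification of the local reciprocity map at $\mathfrak{p}_{\infty}$ for cyclotomic function fields with the sign map (equivalently, that the local norm group of $k(\Lambda_{N})_{w}/k_{\infty}$ is exactly $\ker\phi_{\infty}$), and you correctly flag this.

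The reverse inclusion, however, contains a genuine gap. After establishing that $k_{E,+}/k$ is tame at $\mathfrak{p}_{\infty}$ and geometric, you write ``Proposition~\ref{pro:2.3} gives $k_{E,+}\subseteq k(\Lambda_{N'})$''. This is false: Proposition~\ref{pro:2.3} only yields $k_{E,+}\subseteq k(\Lambda_{N'})\,\mathbb{F}_{q^{m_{0}}}$, and geometricity alone does \emph{not} force $m_{0}=1$. A concrete counterexample (already implicit in the paper) is $K=k(\sqrt[l]{\gamma T})$ with $l\mid q-1$ and $-\gamma\notin(\mathbb{F}_{q}^{*})^{l}$: this $K$ is a geometric abelian extension of $k$, totally (hence tamely) ramified at $\mathfrak{p}_{\infty}$, yet $K\not\subseteq k(\Lambda_{N})$ for any $N$. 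Thus your route from ``geometric and tame'' to ``cyclotomic'' collapses, and with it the character-group comparison $(XW)_{P}=X_{P}$ that follows.

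The repair is to reuse, rather than bypass, the local information at $\mathfrak{p}_{\infty}$ that you already exploited for the first inclusion. From $E\,k_{E,+}\subseteq E_{H^{+}}$ one gets $\ker\phi_{E_{v}}\subseteq N_{(E_{\mathfrak{gex}})_{w}/E_{v}}\bigl((E_{\mathfrak{gex}})_{w}^{*}\bigr)$ for each $v\mid\mathfrak{p}_{\infty}$. By norm compatibility and the fact that $E_{v}\subseteq k(\Lambda_{N})_{w}$ (so $N_{E_{v}/k_{\infty}}(E_{v}^{*})\supseteq\ker\phi_{\infty}$), this forces the norm group of $(E_{\mathfrak{gex}})_{w}/k_{\infty}$ to contain $\ker\phi_{\infty}$. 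Combined with the bound on the finite conductor coming from $e_{P}(E_{\mathfrak{gex}}\mid k)=e_{P}(E\mid k)$, this is precisely the idelic condition characterising subfields of $k(\Lambda_{N})$, and then your character argument goes through. In short: the ``sign-one uniformizer'' lemma you isolate is true and useful, but the conclusion you need from it is not geometricity --- it is the stronger statement that the local extension at $\mathfrak{p}_{\infty}$ is cyclotomic.
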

\begin{proof}
See \cite[Theorem 4.5]{rzedowskicalderon2021class}.
\end{proof}

The extended genus field of a finite abelian extension $K/k$ was defined in \cite{ramirezramirez2019genus} as $K_{\mathfrak{gex}}=E_{\mathfrak{gex}}K$. According to the definition of Angl{\`e}s and Jaulent of $K_{\mathfrak{gex}}$, we have.

\begin{teo}\label{teo:6.3}
With the notations of Theorem \ref{teo:4.1.3}, we have that $K_{\mathfrak{gex}}=DK$ with $(E_{\mathfrak{ge}}^{H})_{\mathfrak{gex}}\subseteq D\subseteq E_{\mathfrak{gex}}$. In particular, when $H=\langle \mathrm{Id}\rangle$, we have $K_{\mathfrak{gex}}=E_{\mathfrak{gex}}K$.
\end{teo}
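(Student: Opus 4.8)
The plan is to translate the statement into class field theory and then reduce everything to the behaviour of $\mathfrak{p}_{\infty}$, combining the description of $E_{\mathfrak{gex}}$ by Dirichlet characters (Theorem~\ref{teo:6.2}), the computation of constant fields (Theorem~\ref{teo:2.10}), and the genus field formula $K_{\mathfrak{ge}}=E_{\mathfrak{ge}}^{H_{1}}K$ of Theorem~\ref{teo:4.1.3}. By the Angl\`es--Jaulent definition, $K_{\mathfrak{gex}}=Kk_{K,+}$ where $k_{K,+}$ is the maximal abelian extension of $k$ with $Kk_{K,+}\subseteq K_{H^{+}}$, and $K_{H^{+}}$ is the class field of $U_{K}^{+}K^{*}$. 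First I would record that, with $F\coloneqq\mathbb{F}_{q^{m_{0}}}(T)$, one has $EF=KF$ — because $E=KF\cap k(\Lambda_{N})$ and $\mathrm{Gal}(k(\Lambda_{N})F/F)\cong\mathrm{Gal}(k(\Lambda_{N})/k)$ — so $EK\subseteq KF$ is a constant extension of $K$; hence $K$ and $E$ have the same ramification at every finite prime, $e_{P}(K\,|\,k)=e_{P}(E\,|\,k)=|X_{P}|$, and, moreover, $K_{\mathfrak{ge}}=E_{\mathfrak{ge}}^{H_{1}}K\subseteq K_{H}\subseteq K_{H^{+}}$ with $\mathfrak{p}_{\infty}$ completely split in $K_{\mathfrak{ge}}/K$, which already gives $K_{\mathfrak{ge}}\subseteq K_{\mathfrak{gex}}$ and that the field of constants of $K_{\mathfrak{gex}}$ is $\mathbb{F}_{q^{t}}$.

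For the upper bound I would take an abelian extension $k_{1}/k$ with $Kk_{1}\subseteq K_{H^{+}}$; then $Kk_{1}/K$ is unramified at every finite prime. Writing the part of $k_{1}$ lying in a cyclotomic function field as $k_{Z}$ for a group of Dirichlet characters $Z$ (the rest being a constant extension, harmless by the previous paragraph), the unramifiedness of $Kk_{1}/K$ at a finite prime $P$ forces $Z_{P}\subseteq X_{P}$ for every $P$, hence $Z\subseteq\prod_{P}X_{P}=Y$, so $k_{Z}\subseteq E_{\mathfrak{gex}}$ by Theorem~\ref{teo:6.2}. Tracking the constant part with Theorem~\ref{teo:2.10}, I expect to conclude that there is an abelian extension $D$ of $k$, contained in $E_{\mathfrak{gex}}$, with $K_{\mathfrak{gex}}=DK$.

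For the lower bound, I would start from $K_{\mathfrak{ge}}=E_{\mathfrak{ge}}^{H_{1}}K\subseteq K_{H^{+}}$ with $\mathfrak{p}_{\infty}$ split over $K$. The field $(E_{\mathfrak{ge}}^{H_{1}})_{\mathfrak{gex}}$ differs from $E_{\mathfrak{ge}}^{H_{1}}$ only by ramification at finite primes already ramified in $E_{\mathfrak{ge}}^{H_{1}}/k$ — absorbed on composing with $K$, exactly as above — and by ramification at $\mathfrak{p}_{\infty}$, which is of ``cyclotomic type'' (the inertia at $\mathfrak{p}_{\infty}$ in a cyclotomic function field is the sign group $\mathbb{F}_{q}^{*}$) and is therefore compatible with the sign condition cutting out $U_{K}^{+}K^{*}$, precisely because $\mathfrak{p}_{\infty}$ is split in $K_{\mathfrak{ge}}/K$. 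This should give $(E_{\mathfrak{ge}}^{H_{1}})_{\mathfrak{gex}}K\subseteq K_{H^{+}}$, i.e. $(E_{\mathfrak{ge}}^{H})_{\mathfrak{gex}}\subseteq D$. When $H=\langle\mathrm{Id}\rangle$ we have $d=f_{\infty}(EK\,|\,K)=1$, so $H_{1}=\langle\mathrm{Id}\rangle$ and $E_{\mathfrak{ge}}^{H_{1}}=E_{\mathfrak{ge}}$; since $E_{\mathfrak{ge}}$ has the same finite ramification as $E$, Theorem~\ref{teo:6.2} gives $(E_{\mathfrak{ge}})_{\mathfrak{gex}}=E_{\mathfrak{gex}}$, the two bounds collapse, and $K_{\mathfrak{gex}}=E_{\mathfrak{gex}}K$.

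The hard part will be the step at $\mathfrak{p}_{\infty}$: at a prime $v$ of $K$ over $\mathfrak{p}_{\infty}$ one must compare the image of the local units under the Artin map with $\ker\phi_{K_{v}}$, keeping track of the integer $d$ from Theorem~\ref{teo:4.1.3} and of the constant fields via Theorem~\ref{teo:2.10}. The decomposition group $H\cong C_{d}$ is precisely the obstruction — the larger $d$ is, the more of $E_{\mathfrak{gex}}$ violates the sign condition over $K$ — which is why in general $D$ can only be bracketed between $(E_{\mathfrak{ge}}^{H})_{\mathfrak{gex}}$ and $E_{\mathfrak{gex}}$, the two coinciding exactly when $H$ is trivial.
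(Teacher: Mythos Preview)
The paper does not prove this theorem; its entire proof is the citation ``See \cite[Theorem 5.1]{rzedowskicalderon2021class}.'' So there is no in-paper argument to compare against, and your sketch is an attempt to reconstruct the cited result. The overall strategy you outline --- bound $D$ above by comparing finite ramification via Dirichlet characters and Theorem~\ref{teo:6.2}, bound $D$ below by starting from $K_{\mathfrak{ge}}=E_{\mathfrak{ge}}^{H_{1}}K$ and passing to the extended genus of $E_{\mathfrak{ge}}^{H_{1}}$ --- is plausible and is very likely close in spirit to the argument in the cited reference.

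That said, there is a concrete error. In your first paragraph you assert that the field of constants of $K_{\mathfrak{gex}}$ is $\mathbb{F}_{q^{t}}$ with $t=f_{\infty}(K\,|\,k)$. That is the constant field of $K_{\mathfrak{ge}}$ (Theorem~\ref{teo:4.1.3}), not of $K_{\mathfrak{gex}}$: in Case~7 of Section~3.3 one has $t=f_{\infty}(K\,|\,k)=1$ while the paper computes the constant field of $K_{\mathfrak{gex}}$ to be $\mathbb{F}_{q^{l}}$. This matters because you immediately use the claim to dismiss the constant part of $k_{1}$ as ``harmless'' in the upper-bound argument. It is not automatically harmless: you must actually show that every constant extension of $K$ inside $K_{H^{+}}$ is already contained in $E_{\mathfrak{gex}}K$, and that is precisely a computation at $\mathfrak{p}_{\infty}$ via Theorem~\ref{teo:2.10} and the local norm (the paper carries it out explicitly for Case~7). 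Without it, the inclusion $D\subseteq E_{\mathfrak{gex}}$ is not established.

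The lower-bound paragraph is likewise a heuristic rather than an argument: ``of cyclotomic type \dots\ compatible with the sign condition'' names the right mechanism but does not verify that $(E_{\mathfrak{ge}}^{H_{1}})_{\mathfrak{gex}}K\subseteq K_{H^{+}}$. You correctly flag this as the hard step; until the local analysis at $\mathfrak{p}_{\infty}$ is actually carried out, what you have is a reasonable plan rather than a proof.
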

\begin{proof}
See \cite[Theorem 5.1]{rzedowskicalderon2021class}.
\end{proof}

\section{Genus field when $K/k$ is a Kummer extension}
\subsection{Genus field when $K$ is a cyclotomic function field }
Let $K/k$ be an elementary abelian $l$-extension. Let $K=K_{1}\cdots K_{m}$ where $K_{j}/k$ is a cyclic extension of degree $l$, $1\leq j\leq m$. When $K/k$ is a Kummer extension, each $ K_ {j} $ can be written as a radical extension as follows: $ K_{j} = k (\sqrt[l] {\gamma_{j} D_{j}}) $ for all $ 1 \leq j \leq m $, where $ \gamma_{j} \in \mathbb{F}_{q}^{*} $ and $ D_{j} \in R_{T} = \mathbb{F}_q[T]$ is an $l$-power free monic polynomial. We obtain from \cite[Corolario 9.5.12]{calderon2016campos}, that $K_{j} = k(\sqrt[l] {\gamma_{j} D_{j}}) \subseteq k(\Lambda_{D_{j}})$ if and only if $ \gamma_{j} \equiv (-1)^{\deg\: D_{j}} \bmod {(\mathbb {F}_{q } ^ {*}) ^ {l}} $ which is equivalent to $ \xi_{j}: = (- 1) ^ {\deg \: D_{j}} \gamma_ {j} \in (\mathbb {F}_{q} ^ {*}) ^ {l} $. There are two subcases, namely, whether or not $K$ is contained in a cyclotomic function field.

First we consider the case when $K$ is contained in a cyclotomic function field.  In this case, $ K_ {j} $ is equal to $ k (\sqrt [l] {(-1)^{\deg\,D_{j} } D_ {j}}) $, for all $ 1\leq j \leq m $. Also note that, if $ l | \deg \: D_ {j} $, then $ k (\sqrt [l] {D_{j}}) \subseteq k (\Lambda_ {D_{j}}) $. We use the notation $ k(\sqrt [l] {(- 1) ^ {\deg \: A} A}) = k (\sqrt [l] {A^{*}}) $ for $A\in R_{T}\setminus\{0\}$.
 
 Let $K\subseteq k(\Lambda_{N})$ for $N\in R_{T}$ and let $\chi_{i}$ be the character associated with $K_{i}$, $i\leq i \leq m $. Then $X\coloneqq \langle\chi_{1},\dots \chi_{m}\rangle$ is the group of Dirichlet characters associated with $K$. Since $\chi_ {i}$ is the character associated with $ K_{i}/k$, which is a cyclic extension of degree $l$, we have $ \mathrm{Gal}(K_{i} / k) \cong \langle \chi_ {i} \rangle$ with $ \circ(\chi_{i}) = l$. Therefore $ X \cong C_ {l}^{m}$. Let $ G:= \mathrm{Gal}(K / k)\cong C_{l}^{m}$.
 
 Let $ S: = \{P \in R_{T}^{+} \; | \; P \big| D_{j}, \; \text{for some} \; 1 \leq j \leq m \}$ be the set of monic irreducible polynomials dividing some $ D_{j} $, $ 1\leq j \leq m $. We have that $ S $ is finite, say $ S = \{P_{1}, \dots, P_{r} \}$. We considered the $ P_{i} $-part of each character $ \chi_{j} $. Let $\langle \chi_{P_{i}}\rangle \coloneqq \langle (\chi_{j})_{P_{i}}\;|\;1\leq j\leq m\rangle$, for all $ 1 \leq i \leq r $. Let $ k (\sqrt[l] {P_{i}^{*}}) $ be the field that corresponds to $ \chi_{P_ {i}} $. We have that $ K \subseteq k (\sqrt[l]{P_ {1}^{*}}) \cdots k (\sqrt [l] {P_ {r} ^ {*}}) \subseteq k (\Lambda_ {N}) $. Let $D_{j}=P_{1}^{\beta_{1j}}\cdots P_{r}^{\beta_{rj}}$ with $1\leq j\leq m$, $0\leq\beta_{ij}\leq l-1$, $1\leq i\leq r$.

For $P\in R_{T}^{+}$, let  $ X_ {P} = \{\chi_{P} \; | \; \chi \in X \} $. Note that the conductor of each $ \chi_{P_{i}} $ is precisely $P_{i}$, $1\leq i\leq r$ (see \cite[Definición 9.4.5]{calderon2016campos}), and $X_{P_{i}} = \langle \chi_{P_{i}} \rangle $. Let
  \begin{equation*}
Y =\prod_{P\in R_{T}^{+}}X_{P}= \prod_{P \in S} X_{P} = \prod_{i = 1}^{r} \langle \chi_{Pi} \rangle.
\end{equation*}
Let $L$ be the field associated with $Y \subseteq \widehat {\mathrm{Gal}(k (\Lambda_{N})/k)}$. We have that $L=k(\sqrt[l]{P_{1}^{*}}, \dots, \sqrt[l]{P_{r}^{*}})$, when $K$ is contained in a cyclotomic field. We also have that $e_{P_{i}}(L | k) = | Y_{P_ {i}} | $. Therefore $ | Y_{P_{i}} | = | X_{P_{i}} |$, that is
  \begin{equation*}
      e_{P_{i}}(L| k)=|Y_{P_{i}}|=|X_{P_{i}}|= e_{P_{i}}(E| k).
  \end{equation*}
Hence $ e_{P_{i}}(L| E)=1$ for all $P_{i}\in S$, where $E=k(\sqrt[l]{D_{1}^{*}},\dots, \sqrt[l]{D_{m}^{*}})$ and no finite prime in $E$ is ramified in $ L/E$. In fact we have $E_{\mathfrak{gex}}=L$ (see \cite[Theorem 2.1]{barreto2018genus}). In this case we have $E=K$.\\
\[
\xymatrix {
           k (\Lambda_{P_{1} \cdots P_{r}}) \ar@{-}[d] & \\
           E_{\mathfrak{gex}} = L\ar@{-}[d] \ar@{<->}[r] & Y \\
           E = K \ar@{-}[d] \ar@{<->}[r] & X \\
           k}
\]
Next, our objective is to find $K_{\mathfrak{ge}}$ when $K$ is contained in a cyclotomic function field. To this end, we split the study in several cases.

Let $P_{1}, \dots, P_{r} $ be arranged in such a way that, $ l \big| \deg \: P_{i} $ for $ 1 \leq i \leq s $ and $ l \nmid \deg \: P_{j} $ for $ s + 1 \leq j \leq r $. If $ s <r $, choose $ a, b \in \mathbb{Z} $ such that $ al + b \deg \: P_{r} = 1 $. Let $  d_ {i}=\deg \:P_ {i} $ for $ 1 \leq i \leq r $ and define $ Q_{i}\coloneqq P_ {i} P_ {r} ^ {-bd_{i}} $ for $ 1 \leq i \leq r-1 $. Since
\begin{equation*}
\begin{split}
    \deg\: Q_{i} &= \deg\: P_{i} +(-bd_{i})\deg\: P_{r}= d_{i}+(-bd_{i})d_{r} = d_{i}(1-bd_{r})=d_{i}(al),
\end{split}    
\end{equation*}
$l|\deg Q_{i}$. We define the subfield $M$ of $E_{\mathfrak{gex}}$, as
$M\coloneq k(\sqrt[l]{Q_{1}},\dots,\sqrt[l]{Q_{r-1}})$. We have that $k(\sqrt[l]{Q_{i}})\subseteq E_{\mathfrak{gex}}$, $1\leq i\leq r-1$ and $[M:k]=l^{r-1}$. Thus $[E_{\mathfrak{gex}}:M]=l$. It's easy to see that $E_{\mathfrak{gex}}=M(\sqrt[l]{P^{*}_{r}} )$.

\subsubsection {Case 1: If $ l \big| \deg \: P_ {i} $ for all $ 1 \leq i \leq r $.} 
\hfill \break

From Proposition \ref{pro: 2.5} we obtain that $ \mathfrak {p} _ {\infty} $ is not ramified in $ k (\sqrt[l]{P^{*}_{i}}) / k $, that is, $ e_{\infty} (k (\sqrt [l] {P_ {i} ^ {*}})| k) = 1 $ for all $ 1 \leq i \leq r $. Therefore $\mathfrak{p}_{\infty}$ is not ramified in $ k (\sqrt [l] {P_ {1} ^ {*}}, \dots, \sqrt [l] {P_ {r} ^ {*}}) / k $. It follows that no finite prime is ramified in $ L/ E $ and $ \mathfrak {p} _ {\infty} $ is fully decomposed in $E_{\mathfrak{gex}}/ k $, in particular in $ E_{\mathfrak{gex}} / E $, since in a cyclotomic function field, we have $ f_ {\infty} (E_{\mathfrak{gex}}| k) = 1 $. Thus $E_{\mathfrak{ge}}=E_{\mathfrak{gex}} $.

\subsubsection {Case 2: If $ l \nmid \deg \: D_ {j} $ for some $ 1 \leq j \leq m $.} \label {case: 2}
\hfill \break

From Proposition \ref{pro: 2.5} we have that $ \mathfrak {p} _ {\infty} $ is ramified in $ k (\sqrt [l] {\smash{D_ {j}} ^ {*}}) / k $, and $ e_{\infty} (K_ {j}| k) = l$. From Abhyankar's Lemma follows that
\begin{equation*}
\begin{split}
      e_{\infty} (E| k) & = \lcm [e_{\infty} (K_ {1}| k), \dots, e_{\infty} (K_ {j}| k), \dots, e_{\infty} (K_ {m}| k)] = l. \\
     \end{split}
\end{equation*}
Since $ e_{\infty} (E_{\mathfrak{gex}}| k) \big| l $ we have $ e_{\infty} (E_{\mathfrak{gex}}| E) = 1 $ and therefore $ E_ {\mathfrak{ge}} = E_ {\mathfrak{gex}} $.

\subsubsection {Case 3: If $ l \big| \deg \: D_ {j} \; \forall \; 1 \leq j \leq m $ and $ s<r $.}
\hfill \break

Under these conditions we have that $ \mathfrak {p} _ {\infty} $ decomposes in $ K_ {j} / k $ for all $ 1 \leq j \leq m $ and therefore, $ \mathfrak {p} _ {\infty} $ decomposes in $ E / k $. Now, since $ l \nmid \deg \: P_ {r} $ we have that $ \mathfrak {p} _ {\infty} $ is ramified in $ k (\sqrt [l] {P_{r}} *) /k$ and therefore in $ E_{\mathfrak{gex}} / k $. We will prove that $E_{\mathfrak{ge}}=M$. 

In fact, to see that  $ E \subseteq M $ it suffices to see that $ E_ {j}\coloneq k(\sqrt[l]{\smash{D_{j}}^{*}}) \subseteq M $ for all $ 1 \leq j \leq m $. We have that  $k (\sqrt[l]{\smash{D_{j}}^{*}}) = k (\sqrt[l]{D_{j}}) $ for all $ 1 \leq j\leq m $. Now
\begin{equation}\label{ecua:4.2}
\begin{split}
   Q_{1}^{{\beta_{1{j}}}}\cdots Q_{r-1}^{{\beta_{(r-1){j}}}}&= (P_{1}P_{r}^{{-bd_{1}}})^{{\beta_{1{j}}}}\cdots(P_{r-1}P_{r}^{{-bd_{r-1}}})^{{\beta_{(r-1){j}}}}\\
    &= P_{1}^{\beta_{1{j}}} P_{r}^{-bd_{1}\beta_{1{j}}} \cdots P_{r-1}^{\beta_{(r-1){j}}} P_{r}^{-bd_{r-1}\beta_{(r-1){j}}}\\
    &= P_{1}^{\beta_{1{j}}}\cdots P_{r-1}^{\beta_{(r-1){j}}} \cdot P_{r}^{-b(\beta_{1{j}}d_{1}+\cdots +\beta_{(r-1){j}}d_{r-1})}. 
\end{split}    
\end{equation}
Since $l\big|\deg\,D_{j}$, there exist $q_{j}\in \mathbb{Z}$ such that $-b(\beta_{1{j}}d_{1}+\cdots +\beta_{(r-1){j}}d_{r-1}  )=-b(lq_{j}-\beta_{r{j}}d_{r})$. Since $al+bd_{r}=1$, it follows 
\begin{align*}
    -blq_{j}+\beta_{r_{j}}bd_{r}&= -blq_{j}+\beta_{r_{j}}(1-al)=-blq_{j}+\beta_{r_{j}}-\beta_{r_{j}}al\\
    &= \beta_{r_{j}}+ l(-bq_{j}-\beta_{r_{j}}a)= \beta_{r_{j}}+l\delta.
\end{align*}   

Hence, from equation (\ref{ecua:4.2}) we obtain that
\begin{equation*}
\begin{split}
   Q_{1}^{{\beta_{1{j}}}}\cdots Q_{r-1}^{{\beta_{(r-1){j}}}}
   &=P_{1}^{\beta_{1{j}}}\cdots P_{r-1}^{\beta_{(r-1){j}}} \cdot P_{r}^{\beta_{r{j}}}P_{r}^{l\delta}.
\end{split}    
\end{equation*}
Thus
\begin{equation*}
\begin{split}
   Q_{1}^{\frac{\beta_{1{j}}}{l}}\cdots Q_{r-1}^{\frac{\beta_{(r-1){j}}}{l}}&=P_{1}^{\frac{\beta_{1{j}}}{l}}\cdots P_{r-1}^{\frac{\beta_{(r-1){j}}}{l}} \cdot P_{r}^{\frac{\beta_{r{j}}}{l}}P_{r}^{\delta}=\sqrt[l]{D_{j}}\cdot P_{r}^{\delta},
  \end{split}
\end{equation*}
with $ P_{r}^{\delta} \in k $. Therefore $ E_{j} = k(\sqrt[l] {D_{j}}) \in M $ for all $ 1 \leq j \leq m $. Hence $ E = E_{1} \cdots E_{m} \subseteq M $. 
 
Now, it only remains to prove that the finite primes are not ramified in $M/E$ and that infinite primes over $\mathfrak{p}_{\infty}$ of $E$ are totally decomposed in $M/E$. We have the tower of fields $ k \subseteq E \subseteq M \subseteq E_{\mathfrak{gex}} $. Due to $ e_{P_{i}} (E_{\mathfrak{gex}}| E) = 1 $, it follows that $ e_{P_{i}} (M| E) = 1 $. Since $l\big|\deg Q_{i}$ for all $1\leq i\leq r-1$  we have that $S_{\infty}(E)$, the primes over $\mathfrak{p}_{\infty}$ in $E$ are totally decomposed. Given that $[E_{\mathfrak{gex}}:M]=l$ it follows that $M/E$ is the maximum unramified extension at the finite primes, and totally decomposed at the infinite primes. Therefore we conclude $E_{\mathfrak{ge}}=M$.  

\begin{rem}
We have that $E_{\mathfrak{ge}}=M$ holds if and only if $ l \big| \deg \: D_{j} $ for all $ 1 \leq j \leq m $ and $ l \nmid \deg\, P_{r} $. Note that there exists $1\leq j\leq m$ such that $P_{r}\big| D_{j}$, with $ \deg \: D_{j} = \beta_{1{j}} d_{1} + \cdots + \beta_{r{j}} d_{r} $, that is $\beta_{rj}\neq 0$. Since $ l \big| \deg \: D_{j} $ and $l\nmid d_{r}$, there exists $1\leq i\leq r-1$ such that $l\nmid \beta_{ij}d_{i}$, and so, $l\nmid d_{i}$. It follows $l\nmid d_{r-1}$.
Thus, there exist at least two $ P_{i}'$s such that their degrees are not divisible by $ l $.
\end{rem}

\subsection{Genus field when $ K $ is not a cyclotomic function field}
\hfill\break
Now we study the non-cyclotomic case, that is, $ K \nsubseteq k (\Lambda_{N}) $ for all $N\in R_{T}$. Since $\mathfrak{p}_{\infty}$ is tamely ramified in the abelian extension $K/k$, from Proposition \ref{pro:2.3} we have that there exist $ N \in R_{ T} $ and $ m_{0} \in \mathbb {N} $ such that $ K \subseteq k (\Lambda_{N}) \mathbb {F}_{q^{m_{0}}} $.

Recall that 
\begin{equation*}
\begin{split}
       E & = K_{m_{0}} \cap k (\Lambda_{N}) = (K_{1} \cdots K_{m} \mathbb{F}_{q ^{m_{0}}}(T )) \cap k (\Lambda_{N}).
\end{split}
\end{equation*}
Thus, we have $ E = E_{1} \cdots E_{m} $ where, $ E_{j} = K_{j} \mathbb {F}_{q^{m_{0}}} \cap k (\Lambda_{N}) $, $ 1 \leq j \leq m $. From the Galois correspondence we obtain that $ E_{j} = k (\sqrt[l] {\smash{D_{j}}^{*}}) $ for all $ 1 \leq j \leq m $. Therefore 
\begin{equation*}
     E = k (\sqrt [l]{D_{1}^{*}}, \dots, \sqrt [l] {D_{m}^{*}}).
\end{equation*}

We also have $ \gamma_{j} \not \equiv (-1)^{\deg\:D_{j}} \bmod{(\mathbb{F}_{q}^{*})^{l}}$ for some $j$. Let $ \xi_{j} =(-1)^{\deg \: D_{j}} \gamma_{j} $. We obtain
\begin{equation*}
\begin{split}
    EK &=E(\sqrt[l]{\xi_{1}},\dots, \sqrt[l]{\xi_{r}} )  
       = K(\sqrt[l]{\xi_{1}},\dots, \sqrt[l]{\xi_{r}} )
       =E\mathbb{F}_{q^{l}}
       =K\mathbb{F}_{q^{l}}. 
\end{split}
 \end{equation*}

Next, we analyze the behavior of $\mathfrak{p}_{\infty}$ in the extension $K/k$, when $K\neq E$. Let $G=\mathrm{Gal}(K/k)\cong C_{l}^{m}$ we have $f_{\infty}(K|k)= \frac{|D_{\infty}(K:k)|}{|I_{\infty}(K:k)|}$,
the cardinality of the Galois group of the residue fields. 

Since the residue fields are finite fields we have that $D_{\infty}(K:k)/I_{\infty}(K:k)$ is a cyclic subgroup of $G=\mathrm{Gal}(K/k)=C_{l}^{m}$. The only cyclic subgroups of $G$ are $\langle \mathrm{Id}\rangle $ and $C_{l}$. It follows that $f_{\infty}(K|k)\big|l$. From Abhyankar's Lemma we also have $ e_{\infty} (K| k) \big| l $. 

The number of subgroups of $G$ of order $l$ is $\frac{l^{m}-1}{l-1}$.
Since $G$ is abelian, $G$ has $\frac{l^{m}-1}{l-1}$ subgroups of index $l$, or equivalently, of order $l^{m-1}$. Let $B_{t}$, $1\leq t\leq \frac{l^{m}-1}{l-1}$, these subgroups. For each $t\in\ldbrack 1,\frac{l^{m}-1}{l-1}\rdbrack$, let $F_{t}$ be the fixed field by $B_{t}$
\[
\xymatrix{
 K \ar@{-}[d]_{l^{m-1}} \\
F_{t}=K^{B_{t}}\ar@{-}[d]_{l}\\
 k
}
\]
It follows, from Galois theory, that there exist $ \frac {l^{m} -1} {l-1} $ subfields $F_{t}$ of $ K / k $ such that $ [F_{t}: k] = l $.

\begin{pro}\label{pro:3.2}
The fields $F_{t}$, $1\leq t\leq \frac{l^{m}-1}{l-1}$ are precisely the fields given by $F\footnotesize_{{\overrightarrow{\alpha_{t}}}}\coloneqq k(\mu_{1}^{\alpha_{1t}}\cdots\mu_{m}^{\alpha_{mt}})$ where $\mu_{j}\coloneqq \sqrt[l]{\gamma_{j}D_{j}}$, $0\leq \alpha_{it}\leq l-1$, $1\leq j\leq m$ and $\overrightarrow{\alpha_{t}}=(\alpha_{1t},\dots,\alpha_{mt})\neq (0,\dots,0)$.
\end{pro}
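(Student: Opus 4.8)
The plan is to transport the Galois correspondence for $K/k$ through Kummer theory. Since we are in the Kummer case, $l\mid q-1$ and $\mu_{l}\subseteq k$, so $K=k(\sqrt[l]{\Delta})$ with $\Delta\coloneqq\langle (k^{*})^{l},\gamma_{1}D_{1},\dots,\gamma_{m}D_{m}\rangle\leq k^{*}$, and Kummer theory furnishes a perfect pairing $\mathrm{Gal}(K/k)\times\Delta/(k^{*})^{l}\longrightarrow\mu_{l}$. Hence $\Delta/(k^{*})^{l}\cong\widehat{\mathrm{Gal}(K/k)}\cong C_{l}^{m}$; viewing it as an $\mathbb{F}_{l}$-vector space of dimension $m$, its $m$ generators $\overline{\gamma_{1}D_{1}},\dots,\overline{\gamma_{m}D_{m}}$ must form a basis. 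In particular, for $\overrightarrow{\alpha}=(\alpha_{1},\dots,\alpha_{m})$ with $0\leq\alpha_{j}\leq l-1$ the element $a_{\overrightarrow{\alpha}}\coloneqq\prod_{j=1}^{m}(\gamma_{j}D_{j})^{\alpha_{j}}$ lies in $(k^{*})^{l}$ if and only if $\overrightarrow{\alpha}=(0,\dots,0)$.

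First I would combine the Galois correspondence with the Kummer correspondence into a single bijection $F\leftrightarrow \Delta_{F}/(k^{*})^{l}$, where $\Delta_{F}\coloneqq\Delta\cap(F^{*})^{l}$ and conversely $F=k(\sqrt[l]{\Delta_{F}})$, between the intermediate fields $k\subseteq F\subseteq K$ and the subgroups of $\Delta/(k^{*})^{l}$; under this bijection $[F:k]=|\Delta_{F}/(k^{*})^{l}|$. Consequently the degree-$l$ subfields of $K/k$ correspond exactly to the order-$l$ subgroups of $\Delta/(k^{*})^{l}$, i.e.\ to the lines through the origin of the $\mathbb{F}_{l}$-vector space $\Delta/(k^{*})^{l}$, of which there are $\frac{l^{m}-1}{l-1}$---the same count as the $F_{t}$. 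Each such line is $\langle\overline{a_{\overrightarrow{\alpha_{t}}}}\rangle=\langle\alpha_{1t}\overline{\gamma_{1}D_{1}}+\dots+\alpha_{mt}\overline{\gamma_{m}D_{m}}\rangle$ for a nonzero vector $\overrightarrow{\alpha_{t}}$, unique up to scaling by $\mathbb{F}_{l}^{*}$, hence corresponds to a point of $\mathbb{P}^{m-1}(\mathbb{F}_{l})$; the field attached to this line is $k(\sqrt[l]{a_{\overrightarrow{\alpha_{t}}}})$.

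Then I would identify $k(\sqrt[l]{a_{\overrightarrow{\alpha_{t}}}})$ with $F_{\overrightarrow{\alpha_{t}}}$. Since $\bigl(\mu_{1}^{\alpha_{1t}}\cdots\mu_{m}^{\alpha_{mt}}\bigr)^{l}=\prod_{j}(\gamma_{j}D_{j})^{\alpha_{jt}}=a_{\overrightarrow{\alpha_{t}}}$, the element $\mu_{1}^{\alpha_{1t}}\cdots\mu_{m}^{\alpha_{mt}}$ is an $l$-th root of $a_{\overrightarrow{\alpha_{t}}}$; as all $l$-th roots of $a_{\overrightarrow{\alpha_{t}}}$ differ by elements of $\mu_{l}\subseteq k$, they generate the same field over $k$, so $F_{\overrightarrow{\alpha_{t}}}=k(\mu_{1}^{\alpha_{1t}}\cdots\mu_{m}^{\alpha_{mt}})=k(\sqrt[l]{a_{\overrightarrow{\alpha_{t}}}})$. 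Moreover $a_{\overrightarrow{\alpha_{t}}}\notin(k^{*})^{l}$ by the first paragraph (as $\overrightarrow{\alpha_{t}}\neq\overrightarrow{0}$), so $x^{l}-a_{\overrightarrow{\alpha_{t}}}$ is irreducible over $k$ and $[F_{\overrightarrow{\alpha_{t}}}:k]=l$. Putting the steps together, $\overrightarrow{\alpha_{t}}\mapsto F_{\overrightarrow{\alpha_{t}}}$ is a well-defined bijection from $\mathbb{P}^{m-1}(\mathbb{F}_{l})$ onto the set of degree-$l$ subfields of $K/k$, which is precisely the family $\{F_{t}:1\leq t\leq (l^{m}-1)/(l-1)\}$.

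The only steps requiring any care, and the closest thing to an obstacle, are the independence of the classes $\overline{\gamma_{j}D_{j}}$---which is forced by the hypothesis $|\mathrm{Gal}(K/k)|=l^{m}$ through the Kummer pairing---and the standard lemma that over a field containing $\mu_{l}$ the polynomial $x^{l}-a$ is irreducible precisely when $a\notin(k^{*})^{l}$; neither is a genuine difficulty, since the entire statement is dictated by the dictionary between the Galois and Kummer sides.
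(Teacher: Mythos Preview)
Your argument is correct. The paper does not give a formal proof of this proposition; the justification appears in the Remark immediately following it, which is a pure counting argument: there are $l^{m}-1$ nonzero vectors $\overrightarrow{\alpha_{t}}$, two of them give the same field precisely when they differ by a scalar in $\mathbb{F}_{l}^{*}$, so the $F_{\overrightarrow{\alpha_{t}}}$ yield $\frac{l^{m}-1}{l-1}$ distinct degree-$l$ subfields of $K$, and since the Galois correspondence (established just before the proposition) says there are exactly that many, the two families coincide.

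Your route is the same result viewed through the Kummer dictionary rather than by counting both sides separately: you set up the perfect pairing $\mathrm{Gal}(K/k)\times\Delta/(k^{*})^{l}\to\mu_{l}$ and read off the bijection between degree-$l$ subfields and lines in $\Delta/(k^{*})^{l}\cong\mathbb{F}_{l}^{m}$ directly. This is slightly more structural---it explains \emph{why} $F_{\overrightarrow{\alpha_{t}}}=F_{\overrightarrow{\alpha_{t'}}}$ iff the vectors are proportional, rather than asserting it---but it is not a genuinely different approach: both arguments hinge on the identification of subfields with points of $\mathbb{P}^{m-1}(\mathbb{F}_{l})$, and the paper's Remark is simply the compressed version of what you wrote out in full.
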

\begin{rem}
We note that, the number of fields described in Proposition \ref{pro:3.2} is $l^{m}-1$. We have that $F\footnotesize_{{\overrightarrow{\alpha_{t}}}}=F\footnotesize_{{\overrightarrow{\alpha_{t'}}}}$ if only if $\overrightarrow{\alpha_{t}}=\lambda\overrightarrow{\alpha_{t'}}\bmod{l}$, with $0\leq \lambda\leq l-1$. Therefore we obtain $\frac{l^{m}-1}{l-1}$ different fields, each $F\footnotesize_{{\overrightarrow{\alpha_{t}}}}\subseteq K$ and $[F\footnotesize_{{\overrightarrow{\alpha_{t}}}}:k]=l$.
\end{rem}
We will use the following notation for $F_{t}\coloneq F_{\footnotesize\overrightarrow{\alpha_{t}}}=k(\sqrt[l]{\eta_{t}R_{t}})$, where
\begin{equation}\label{ecua:3}
 \eta_{t}=\gamma_{1}^{\alpha_{1t}}\cdots \gamma_{m}^{\alpha_{mt}};\;\; R_{t}=D_{1}^{\alpha_{1t}}\cdots D_{m}^{\alpha_{mt}};\hspace{0.2cm} \overrightarrow{\alpha_{t}}=(\alpha_{1t},\dots,\alpha_{mt})\in (\mathbb{F}_{l}^{m})^{*}   
\end{equation}

We have that $ e_{\infty} (K| k) \big| l $. If $ e_{\infty} (K| k) = l $, by Abhyankar's Lemma, there exists $1\leq j\leq m$ such that $e_{\infty}(K_{j}|k)=l$.
\\

We will see the analogous result for the inertial degree of $ \mathfrak{p}_{\infty} $. That is:
 \begin{equation}\label{ecua:4.5}
     f_{\infty}(K|k)=l \Longleftrightarrow \exists\; t\in\Big \ldbrack 1,\frac{l^{m}-1}{l-1}\Big\rdbrack\; \;\text{such that}\;\; f_{\infty}(F_{t}|k)=l. 
 \end{equation} 
 
If $ f_{\infty} (F_{t}| k) = l $ for some $t$, then $f_{\infty}(K|k)=l$.
We prove the sufficiency by induction on $ m $. For $ m = 1 $ we have $\frac{l^{m}-1}{l-1}=1$ and there is nothing to prove.

Suppose the result holds for $m-1$, $m\geq 2$. We have $e_{\infty}(K|k)|l$. If $e_{\infty}(K|k)=1$ then $D_{\infty}(K:k)\cong C_{l}$, since $I_{\infty}(K:k)=\langle\mathrm{Id}\rangle $. 
\\
Let $\mathcal{H}_{1}<G$ be such that $|\mathcal{H}_{1}|=l$ and $\mathcal{H}_{1}\neq D_{\infty}(K:k)$. Let $K'=K^{\mathcal{H}_{1}}$. Since, $ \mathcal{H}_{1} \neq D_{\infty} (K: k) $ we have $ f_{\infty} (K| K')=1 $ and $ f_{\infty} (K'| k) = l $. Since $ K'$ is a field generated by $ m-1 $ subfields of degree $ l $ over $ k $, by the induction hypothesis we obtain that there exists $ t \in \ldbrack 1 , \frac {l^{m-1} -1} {l-1} \rdbrack $ with $ [F_{t}: k] = l $ such that $ f_{\infty} (F_{t}| k) = l $.

If $ e_{\infty} (K| k) = l $ we have that $ D_{\infty} (K: k) \cong C_{l} \times C_{l} $ since $ I_{\infty} (K: k) \cong C_ {l} $. We have
\begin{equation*}
    \frac{D_{\infty}(K:k)}{I_{\infty}(K:k)} \cong C_{l}.
\end{equation*}
Let $J\cong \mathrm{Gal}\Big(\mathcal{O}_{\mathfrak{P}_{\infty}}/\mathfrak{P}_{\infty}:{\mathcal{O}_{\mathfrak{p}_{\infty}}/\mathfrak{p}_{\infty}}\Big).$ Choose $ \mathcal{H}_{2}<G $ such that $ | \mathcal{H}_{2} | = l $ and $ \mathcal{H}_{2} \neq J $. If $ K'' = K^{\mathcal{H}_{2}} $, then $ f_{\infty} (K|K '') = 1 $ and $ f_{\infty} (K''| k) = l $. We have that $ K'' $ is generated by $ m-1 $ subfields of degree $ l $ over $ k $. We have the following diagram
 \[
 \xymatrix{
 & K\ar@{-}[ld]_{_{f_{\infty}(K|K'')=1}} \ar@{-}[rd]^{_{f_{\infty}(K|K^{J})=l}}   &\\
 K'' \ar@{-}[rd]_{_{f_{\infty}(K''|k)=l}} & & K^{J} \ar@{-}[ld]^{_{f_{\infty}(K^{J}|k)=1}}\\
 & k &
 }
\]
By induction, there exists $ t \in \ldbrack 1, \frac {l^{m-1} -1} {l-1} \rdbrack $ with $ k \subseteq F_{t} \subseteq K'' \subseteq K $ such that $[F_{t}: k] = l $ and $ f_{\infty} (F_{t}| k) = l $. In particular, $ F_{t} $ belongs to the lattice of subfields of degree $ l $ over $ k $ of the extension $ K / k $.\\

Now, from \cite[Teorema 9.1.7]{calderon2016campos} we have:
\begin{equation*}
    f_{\infty}(\mathbb{F}_{q^{l}}(T)|k)=\frac{l}{\gcd(d_{k}(\mathfrak{p}_\infty),l)}=l.
\end{equation*}
Since $K\mathbb{F}_{q^{l}}(T)/k$ is an elementary abelian $l$-extension, the inertia group is a cyclic group of order $1$ or $l$ and
$ f_{\infty} (K \mathbb{F}_{q^{l}}(T)| k) = l $. Consider the following tower of fields: $ k\subseteq K\subseteq EK = K\mathbb {F}_{q^{l}} $. Thus
\begin{equation}\label{ecua:4.10}
    f_{\infty}(EK|K)=l \Longleftrightarrow f_{\infty}(K|k)=1.
\end{equation} 
Let $H'\coloneqq D_{\infty}(EK:K)$ so $f_{\infty}(EK|K)=|H'|$. From (\ref{ecua:4.10}) it follows that
\begin{equation*}
    |H'|\neq 1 \Longleftrightarrow f_{\infty}(K|k)=1.
\end{equation*}

Since the extensions $ E / k $ and $ EK / K $ are of degrees $ l^{m} $ and $ l $ respectively, from the Galois correspondence, $ K $ corresponds to $ E \cap K $ in the extension $ E / k $ and therefore $ [E \cap K: k] = l^{m-1} $.

To compute the genus field of $ K $ we need to study the behavior of $ \mathfrak{p}_{\infty} $ in $K/k$. So, if $ f_{\infty} (EK| K) = l $, since $ f_{\infty} (E| k) = 1 $, we have that $ H|_{E} = I_{\infty} (E: k) $. In particular $ e_{\infty} (E| k) = l $. From Case 2, $ E_{\mathfrak{ge}}=E_{\mathfrak{gex}} = k (\sqrt[l]{P_{1}^{*}}, \dots, \sqrt[l]{P_{r}^{*}}) $. Hence, if $ f_{\infty} (EK| K) = l $ then $ H'\cong C_{l} $. Thus, $[E_{\mathfrak{ge}}: E _{\mathfrak{ge}}^{H_{1}}] = l$ where $H_{1}=H'|_{E_{\mathfrak{ge}}}$, it follows that that $E\nsubseteq M$.

Now we see that $ M = E_{\mathfrak{ge}}^{H_{1}} $. For this it is enough to prove that $ M \subseteq E_{\mathfrak{ge}}^{H_{1}} $. Recall that $ Q_{i} = P_{i} P_{r}^{-bd_{i}} $, where $ d_{i} = \deg \: P_{i} $ and that $ l |\deg \: Q_{i} $ for all $ 1 \leq i \leq r-1 $. Thus $ \mathfrak {p}_{\infty} $ decomposes in $ M / k $. Now, $ H=D_{\infty} (EK: K) $ and $H|_{E}= I_{\infty} (E: k) $ we have that $ \mathfrak{p}_{\infty} $ decomposes fully in $ E_{\mathfrak{ge}}^{H_{1}} / k $. Therefore we have $ M \subseteq E_{\mathfrak{ge}}^{H_{1}} $ and 
\begin{equation}\label{ecua:4.11}
  E_{\mathfrak{ge}}^{H_{1}}=E_{\mathfrak{gex}}^{H_{1}}=M.
\end{equation}

To show that $m_{0}=l$, first we recall the following result. 
\begin{teo}\label{teo:3.4}
Let $K$ be a finite abelian extension of $k$. Let $n,m_{0}\in \mathbb{N}$ and $N\in R_{T}$ be such that $K\subseteq {}_{n}k(\Lambda_{N})_{m_{0}}$ and such that $m_{0}$ is minimum with this property. Then $m_{0}$ is independent of $n$ and $N$. Let $t=f_{\infty}(K|k)$ be the degree of the infinite primes of $K$. Let $\mathcal{M}=L_{n}k_{m_{0}}$, $E=K\mathcal{M}\cap k(\Lambda_{N})$, $ \mathcal{F}=K\cap {}_{n}k(\Lambda_{N})$ and $d=f_{\infty}(EK|K)=f_{\infty}(E_{\mathfrak{gex}}K|K_{\mathfrak{ge}})$. Then
\begin{equation*}
    {}_{n}k(\Lambda_{N})K={}_{n}k(\Lambda_{N})_{m_{0}}
\end{equation*}
and
\begin{equation*}
    m_{0}=[K:F]=te_{\infty}(K|F)=tdp^{s}=f_{\infty}(EK|k)p^{s}
\end{equation*}
for some $s\geq 0$. In particular
\begin{equation*}
    e_{\infty}(K|F)=dp^{s}=f_{\infty}(K\mathbb{F}_{q^{m_{0}}}|K).
\end{equation*}
\end{teo}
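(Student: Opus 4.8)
The plan is to run the whole argument inside $L\coloneqq{}_{n}k(\Lambda_{N})_{m_{0}}=L_{n}\,k(\Lambda_{N})\,\mathbb{F}_{q^{m_{0}}}$. The three fields $L_{n}/k$, $k(\Lambda_{N})/k$ and $\mathbb{F}_{q^{m_{0}}}(T)/k$ are pairwise linearly disjoint over $k$: the first is geometric and totally \emph{wildly} ramified at $\mathfrak{p}_{\infty}$, the second is geometric and \emph{tamely} ramified at $\mathfrak{p}_{\infty}$, the third is the constant extension; hence $G\coloneqq\mathrm{Gal}(L/k)\cong A\times B\times C$ with $A\coloneqq\mathrm{Gal}(L_{n}/k)$ a $p$-group of order $q^{n}$, $B\coloneqq\mathrm{Gal}(k(\Lambda_{N})/k)=G_{N}$, and $C\coloneqq\mathrm{Gal}(\mathbb{F}_{q^{m_{0}}}/\mathbb{F}_{q})$ cyclic of order $m_{0}$. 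Fix a place $\mathfrak{P}\mid\mathfrak{p}_{\infty}$ of $L$. Since the projections $G\to A,B,C$ are the restriction maps to the three subfields, and since restriction carries a decomposition (resp. inertia) group to the corresponding group of the subfield, using that $\mathfrak{p}_{\infty}$ is totally ramified in $L_{n}/k$, has decomposition group $=$ inertia group $=\mathfrak{J}\cong\mathbb{F}_{q}^{*}$ in $k(\Lambda_{N})/k$, and is totally inert (so unramified) in $\mathbb{F}_{q^{m_{0}}}(T)/k$, one obtains $I_{\mathfrak{P}}(L|k)=A\times\mathfrak{J}$ and $D_{\mathfrak{P}}(L|k)=A\times\mathfrak{J}\times C$. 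In particular $f_{\infty}(L|k)=|C|=m_{0}$, and for $\Xi\le G$ with $F'=L^{\Xi}$ one has $e_{\infty}(F'|k)=[A\times\mathfrak{J}:(A\times\mathfrak{J})\cap\Xi]$ and $f_{\infty}(F'|k)=m_{0}\,|(A\times\mathfrak{J})\cap\Xi|/|(A\times\mathfrak{J}\times C)\cap\Xi|$. Put $\Gamma\coloneqq\mathrm{Gal}(L/K)$.

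Next I would settle the minimality assertions. The subgroups of $C$ correspond to the fields ${}_{n}k(\Lambda_{N})_{m'}$ with $m'\mid m_{0}$, so $K\subseteq{}_{n}k(\Lambda_{N})_{m'}$ for some $m'<m_{0}$ exactly when $\Gamma\cap C\neq\langle\mathrm{Id}\rangle$; thus $m_{0}$ is minimal if and only if $\Gamma\cap C=\langle\mathrm{Id}\rangle$. This yields ${}_{n}k(\Lambda_{N})K=L^{\Gamma\cap C}=L={}_{n}k(\Lambda_{N})_{m_{0}}$ and, writing $\mathcal{F}=K\cap{}_{n}k(\Lambda_{N})=L^{\Gamma C}$ (so $\Gamma C=\Gamma\times C$), $[K:\mathcal{F}]=[\Gamma C:\Gamma]=|C|=m_{0}$. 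For the independence of $m_{0}$ from $n,N$: given two admissible triples $(n_{i},N_{i},m_{i})$, set $n=\max\{n_{1},n_{2}\}$, $N=\lcm\{N_{1},N_{2}\}$; then $K\cdot{}_{n}k(\Lambda_{N})=(K\cdot{}_{n_{i}}k(\Lambda_{N_{i}}))\cdot{}_{n}k(\Lambda_{N})={}_{n_{i}}k(\Lambda_{N_{i}})_{m_{i}}\cdot{}_{n}k(\Lambda_{N})={}_{n}k(\Lambda_{N})_{m_{i}}$, whose exact constant field is $\mathbb{F}_{q^{m_{i}}}$ because ${}_{n}k(\Lambda_{N})/k$ is geometric; as this field does not depend on $i$, $m_{1}=m_{2}$.

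For the splitting of $\mathfrak{p}_{\infty}$: since $\mathcal{F}\subseteq{}_{n}k(\Lambda_{N})$, where $\mathfrak{p}_{\infty}$ is totally ramified of residue degree $1$, $\mathcal{F}$ has a unique place $\mathfrak{p}_{\mathcal{F}}$ over $\mathfrak{p}_{\infty}$ and $f_{\infty}(\mathcal{F}|k)=1$; the place of ${}_{n}k(\Lambda_{N})$ over $\mathfrak{p}_{\mathcal{F}}$ has degree $1$, hence is inert in the constant extension $L/{}_{n}k(\Lambda_{N})$, so $C\subseteq D_{\mathfrak{P}}(L|\mathcal{F})$. With $\mathrm{Gal}(L|\mathcal{F})=\Gamma C$ this forces $D_{\mathfrak{P}}(L|\mathcal{F})\cdot\Gamma=\mathrm{Gal}(L|\mathcal{F})$, i.e. $\mathfrak{p}_{\mathcal{F}}$ is non-split in $K$. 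Hence $m_{0}=[K:\mathcal{F}]=e_{\infty}(K|\mathcal{F})\,f_{\infty}(K|\mathcal{F})=e_{\infty}(K|\mathcal{F})\,t$, using $f_{\infty}(K|\mathcal{F})=f_{\infty}(K|k)=t$; so $e_{\infty}(K|\mathcal{F})=m_{0}/t$. As $t\mid m_{0}$, the residue-field computation for the constant extension $K\mathbb{F}_{q^{m_{0}}}/K$ gives $f_{\infty}(K\mathbb{F}_{q^{m_{0}}}|K)=\lcm(t,m_{0})/t=m_{0}/t=e_{\infty}(K|\mathcal{F})$.

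The remaining, and \textbf{most delicate}, point is $e_{\infty}(K|\mathcal{F})=d\,p^{s}$ with $d\coloneqq f_{\infty}(EK|K)$ and $s\ge0$ — equivalently, that $m_{0}/f_{\infty}(EK|k)$ is a power of $p$. From $\mathrm{Gal}(L|\mathcal{M})=B$ and $\mathrm{Gal}(L|k(\Lambda_{N}))=A\times C$ one computes $\mathrm{Gal}(L|E)=A\times(\Gamma\cap B)\times C$, and then, by Dedekind's modular law (valid since $\Gamma\cap B\le\Gamma$), $\Delta\coloneqq\mathrm{Gal}(L|EK)=(\Gamma\cap B)\cdot(\Gamma\cap(A\times C))$. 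The crucial observation is that $\Gamma\cap(A\times C)$ injects into $A$ under $\pi_{A}$ — its kernel is $\Gamma\cap C=\langle\mathrm{Id}\rangle$ — hence is a $p$-group, so $\pi_{C}(\Delta)=\pi_{C}(\Gamma\cap(A\times C))$ is a $p$-group as well. Applying the formula of the first paragraph to $\Xi=\Delta$, $m_{0}/f_{\infty}(EK|k)=[(A\times\mathfrak{J}\times C)\cap\Delta:(A\times\mathfrak{J})\cap\Delta]=|\pi_{C}((A\times\mathfrak{J}\times C)\cap\Delta)|$, which is a power of $p$; call it $p^{s}$. Since $f_{\infty}(EK|k)=f_{\infty}(EK|K)\,f_{\infty}(K|k)=d\,t$, comparison with $m_{0}=e_{\infty}(K|\mathcal{F})\,t$ gives $e_{\infty}(K|\mathcal{F})=d\,p^{s}$ and $m_{0}=t\,d\,p^{s}=f_{\infty}(EK|k)\,p^{s}$; the remaining identity $d=f_{\infty}(E_{\mathfrak{gex}}K|K_{\mathfrak{ge}})$ is read off from the genus-field description in Theorem~\ref{teo:4.1.3}. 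I expect this last step to be where the real work lies: everything turns on the fact that the gap between $m_{0}$ and the residue degree of $\mathfrak{p}_{\infty}$ in $EK$ is \emph{purely wild}, which is precisely what ``$\Gamma\cap(A\times C)$ is a $p$-group'' provides.
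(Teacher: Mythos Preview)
The paper does not give a proof of this theorem; it simply cites \cite[Theorem~5.1]{barreto2018genus}. Your argument is a correct, self-contained proof and is in the same spirit as the cited source: working inside $L={}_{n}k(\Lambda_{N})_{m_{0}}$ with the direct-product decomposition $G\cong A\times B\times C$, identifying $I_{\mathfrak{P}}(L|k)=A\times\mathfrak{J}$ and $D_{\mathfrak{P}}(L|k)=A\times\mathfrak{J}\times C$, reading minimality of $m_{0}$ as $\Gamma\cap C=\langle\mathrm{Id}\rangle$, and isolating the wild contribution via the observation that $\Gamma\cap(A\times C)$ injects into the $p$-group $A$. All the index computations check out.

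One minor caveat: for the equality $d=f_{\infty}(E_{\mathfrak{gex}}K|K_{\mathfrak{ge}})$ you invoke Theorem~\ref{teo:4.1.3}, but that theorem, as stated in this paper, assumes $\mathfrak{p}_{\infty}$ is \emph{tamely} ramified in $K/k$, whereas the present statement carries no such hypothesis. The general case (allowing wild ramification) is exactly what \cite[Theorem~5.1]{barreto2018genus} covers, so at that one point you should cite the external reference directly rather than Theorem~\ref{teo:4.1.3}. This does not affect the rest of your argument, which only uses the definition $d=f_{\infty}(EK|K)$.
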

\begin{proof}
See \cite[Theorem 5.1]{barreto2018genus}.
\end{proof}
\begin{rem}
In particular when $K/k$ is tamely ramified at $\mathfrak{p}_{\infty}$, we have $s=0$ and $m_{0}=td$. In the general case, we may have $s\geq 1$.
\end{rem}
Now, we have that $K/k$ is tamely ramified at $\mathfrak{p}_{\infty}$. From Theorem \ref{teo:3.4} and equation (\ref{ecua:4.10}), we have $m_{0}=f_{\infty}(EK|K)f_{\infty}(K|k)=l$.\\

Consider the following diagram
\[
\xymatrix{
           &E_{\mathfrak{gex}}\ar@{-}[rrr] &&&   E_{\mathfrak{gex}}K\\
           &E_{\mathfrak{ge}}\ar@{-}[u] \ar@{-}[rrr] &&&   E_{\mathfrak{ge}}K \ar@{-}[u] \\
          M=E_{\mathfrak{ge}}^{H_{1}} \ar@{-}[ru]\ar@{-}[rrr] & & &  E_{\mathfrak{ge}}^{H_{1}}K=K_{\mathfrak{ge}} \ar@{-}[ru] \ar@{-}[] & \\
          && E\ar@{-}[rrr] \ar@{-}[luu]|-{\hole}  &&& EK \ar@{-}[luu]\\
          &E\cap K\ar@{-}[ru] \ar@{-}[rrr] &&& K\ar@{-}[ru] \ar@{-}[luu]|-{\hole}\\
          &&&F_{1}\cdots\cdot F_{t}\cdots\cdot F_{\frac{l^{m}-1}{l-1}}\ar@{-}[ru]&&\\
          &k \ar@{-}[rrrr]_{f_{\infty}(k_{l}|k)=l} \ar@{-}[luuuu] \ar@{-}[rru]\ar@{-}[uu]\ar@{-}[]&&&& \mathbb{F}_{q^{l}}(T) \ar@{-}[uuu]
}
\]
Next we describe all cases of the genus field of $ K = k (\sqrt[l]{\gamma_{1}D_{1}}) \cdots k(\sqrt[l]{\gamma_{m}D_{m}}) $ over $k$. 

\subsubsection {Case 4: If $ l | \deg \: P_{i} \; \text {for all} \; 1 \leq i \leq r $.}
\hfill\break
In Case 1 it was proved that the genus field of $ E $ is $ E_{\mathfrak{ge}} = E_{\mathfrak{gex}} $. Then $ f_{\infty} (EK| K) \big| e_{\infty} (E| E \cap K) = 1 $. It follows that $ H = D_{\infty} (E_{\mathfrak{ge}} K: K) = \langle \mathrm {Id} \rangle $. It follows that
\begin{equation*}
    K_{\mathfrak{ge}}=E_{\mathfrak{ge}}^{H_{1}}K=E_{\mathfrak{gex}}K.
\end{equation*}

\subsubsection{Case 5: If $ l | \deg\,D_{j}  \; \text{for all} \; 1 \leq j \leq m \; \text{and} \; s <r $.}
\hfill\break
In Case 3 it was proved that the genus field of $ E $ is $ E_{\mathfrak{ge}} = M $. Again
$ f_{\infty} (EK| K) \big| e_{\infty} (E| E \cap K) = 1 $ so that
\begin{equation*}
    K_{\mathfrak{ge}}=E_{\mathfrak{ge}}^{H_{1}}K= MK.
\end{equation*}

\subsubsection {Case 6: If $ l \nmid \deg\,D_{j} \; \text{for some} \; 1 \leq j \leq m $  and $\mathfrak{p}_{\infty}$ is inert in some extension $F_{t}/k$.}
\hfill\break
In Case 2 we obtained that the genus field of $ E $ is $ E_{\mathfrak{ge}} = E_{\mathfrak{gex}}  $. We have that there are extensions $F_{t}/k$ with $F_{t}=k(\sqrt[l]{\eta_{t}R_{t}})$, such that, $f_{\infty}(F_{t}|k)=l$. From Proposition \ref{pro: 2.5}, we have that $\eta_{t}\notin (\mathbb{F}_{q}^{*})^{l}$ and that $l\big|\deg R_{t}$.

Therefore, from (\ref{ecua:4.5}) and (\ref{ecua:4.10}) it follows that $ f_{\infty}(EK| K) = 1 $, or equivalently, $ H = \langle \mathrm {Id} \rangle $. Hence, the genus field of $ K $ is equal to
\begin{equation*}
    K_{\mathfrak{ge}}=E_{\mathfrak{ge}}^{H_{1}}K=E_{\mathfrak{gex}}K.
\end{equation*}

\subsubsection {Case 7: If $ l \nmid \deg\,D_{j} \; \text{for some} \; 1 \leq j \leq m $ and $f_{\infty}(K|k)=1$.}
\hfill\break

It was obtained in Case 2 that $ E_{\mathfrak{ge}}=E_{\mathfrak{gex}} $.
Since $f_{\infty}(K|k)=1$, we have that $f_{\infty}(EK| K ) = l $. Therefore $ H_{1} \cong D_{\infty}(EK: K)\cong C_{l} $. From (\ref{ecua:4.11}) follows that, the genus field of $ K $ is
\begin{equation*}
    K_{\mathfrak{ge}}=E_{\mathfrak{ge}}^{H_{1}}K=E_{\mathfrak{gex}}^{H_{1}}K=MK.
\end{equation*}. 

\begin{subsection}{Extended Genus Field.}
 Now, we compute the extended genus field $K_{\mathfrak{gex}}$ of $K$.

For Cases 1 to 3 we obtain $E_{\mathfrak{gex}}=k(\sqrt[l]{P_{1}^{*}}\dots, \sqrt[l]{P_{r}^{*}})$, and since $K=E$, from Theorem \ref{teo:6.2} follows that $K_{\mathfrak{gex}}=E_{\mathfrak{gex}}=E_{\mathfrak{gex}}K$. For Cases 4 to 6 we have that $H=\langle \textrm{Id}\rangle$, from Theorem \ref{teo:6.3} follows that $K_{\mathfrak{gex}}=E_{\mathfrak{gex}}K$.
 
In Case 7, we obtained $H\cong C_{l}$. We have that $E_{\mathfrak{ge}}^{H}=k(\sqrt[l]{Q_{1}},\dots, \sqrt[l]{Q_{r-1}})=M$ and $E_{\mathfrak{gex}}=E_{\mathfrak{ge}}=E_{\mathfrak{ge}}^{H}(\sqrt[l]{P_{r}^{*}})$. It follows that  
$[E_{\mathfrak{gex}}:E_{\mathfrak{ge}}^{H}]=l$, that $l\nmid \deg P_{r}$ and that the infinite primes are totally ramified in $E_{\mathfrak{ge}}/E_{\mathfrak{ge}}^{H}$. Note that we have in this case two possibilities, $l|d_{r-1}$ or $l\nmid d_{r-1}$. 

From \cite[Teorema 10.3.1]{calderon2016campos} we have that the ramification index of $P_{r}$ in $E_{\mathfrak{ge}}^{H}/k$ is $e_{P_{r}}(E_{\mathfrak{ge}}^{H}|k)=\frac{l}{c}$, where $c=\gcd(-bd_{r-1},l)$. 

When $l\nmid d_{r-1}$ we have that $P_{r}$ is ramified in $E_{\mathfrak{ge}}^{H}/k$. Now, since $(E_{\mathfrak{ge}}^{H})_{\mathfrak{gex}}\subseteq (E_{\mathfrak{ge}})_{\mathfrak{gex}}=E_{\mathfrak{ge}}$ and $E_{\mathfrak{ge}}/k$ is unramified at the finite primes, it follows that $(E_{\mathfrak{ge}}^{H})_{\mathfrak{gex}}=E_{\mathfrak{ge}}$. From Theorem \ref{teo:6.3} we conclude that $K_{\mathfrak{gex}}=E_{\mathfrak{ge}}K=E_{\mathfrak{gex}}K$.

When $l|d_{r-1}$, we have that $P_{r}$ is ramified in $E_{\mathfrak{ge}}/(E_{\mathfrak{ge}}^{H})_{\mathfrak{gex}}$. Therefore $[E_{\mathfrak{ge}}:(E_{\mathfrak{ge}}^{H})_{\mathfrak{gex}}]=l$. Hence $(E_{\mathfrak{ge}}^{H})_{\mathfrak{gex}}=E_{\mathfrak{ge}}^{H}$.

Note that $K_{\mathfrak{gex}}$ may be $E_{\mathfrak{gex}}K$ or $E_{\mathfrak{ge}}^{H}K$. 
We have
\begin{align*}
f_{\infty}=f_{\infty}(K|k)&=[\mathcal{O}_{\mathfrak{P}_{\infty}}/\mathfrak{P}_{\infty}:\mathcal{O}_{\mathfrak{p}_{\infty}}/\mathfrak{p}_{\infty}]=[\mathbb{F}_{q^{f_{\infty}}}:\mathbb{F}_{q}].
    \end{align*}
For Case 7, we have that $f_{\infty}(K|k)=1$, so the degree of $\mathfrak{p}_{\infty}$ in $K$ is $1$. Therefore, from Theorem \ref{teo:4.1.3}, it follows that the constant field of $K_{\mathfrak{ge}}$ is $\mathbb{F}_{q}(T)$.     
We have that $\mathbb{F}_{q^{l}}(T)\subseteq EK\subseteq E_{\mathfrak{gex}}K$. Therefore $E_{\mathfrak{gex}}K/K_{\mathfrak{ge}}$, is an extension of constants, that is $E_{\mathfrak{gex}}K=K_{\mathfrak{ge}}\mathbb{F}_{q^{l}}$, where the constant field of $E_{\mathfrak{gex}}K$ is $\mathbb{F}_{q^{l}}$.  

With the conditions of Case 7, we have that $\gamma_{j}\not\equiv (-1)^{n_{j}}\bmod{(\mathbb{F}_{q}^{*})^{l}}$ for some $1\leq j\leq m$. Note that $\mathfrak{p}_{\infty}$ is ramified in $k(\sqrt[l]{\gamma_{j}D_{j}})/k$ since $f_{\infty}(K|k)=1$, so $l\nmid n_{j}$. Since $(k(\sqrt[l]{\gamma_{j}D_{j}}))_{\mathfrak{gex}}\subseteq K_{\mathfrak{gex}}$, it is enough to find the constant field of the extended Hilbert class field $K_{j_{H^{+}}}$ of $K_{j}=k(\sqrt[l]{\gamma_{j}D_{j}})$.

We have that $e_{\infty}(K_{j}|k)=l$ and $f_{\infty}(K_{j}|k)=1$. Let $K_{j_{\infty}}=k_{\infty}(\sqrt[l]{\gamma_{j}D_{j}})$ be the completion of $K_{j}$ at $\mathfrak{p}_{\infty}$. Fix $\pi_{\infty}=\frac{1}{T}$ a prime element in $k_{\infty}$. Let
\begin{equation*}
\begin{split}
     D_{j}&=T^{n_{j}}+ a_{n_{j}-1}T^{n_{j}-1}+\cdots a_{1}T +a_{0}\\
    &= T^{n_{j}}\Big(1+ a_{n_{j}-1}\big(\frac{1}{T}\big)+\cdots a_{1}\big(\frac{1}{T}\big)^{n_{j}-1}+a_{0}\big(\frac{1}{T}\big)^{n_{j}}\Big)
\end{split}
 \end{equation*}
and define $u\coloneq (1+ a_{n_{j}-1}(\frac{1}{T})+\cdots a_{1}(\frac{1}{T})^{n_{j}-1}+a_{0}(\frac{1}{T})^{n_{j}})\in U_{\infty}^{(1)} $. Since $l\nmid p$, $(U_{\infty}^{(1)})^{l}=U_{\infty}^{(1)}$. Therefore, there exists $v\in U_{\infty}^{(1)} $ such that $u=v^{l}$. Let $n_{j}=lm_{j}-r_{j}$, with $0< r_{j}<l$ and $m_{j},r_{j}\in \mathbb{Z}$. Hence
\begin{equation*}
    K_{j_{\infty}}=k_{\infty}\Big(\sqrt[l]{\gamma_{j}T^{-r_{j}}(T^{m_{j}}v)^{l}}\Big)=k_{\infty}\Big(\sqrt[l]{\frac{\gamma_{j}}{T^{r_{j}}}} \Big)=k_{\infty}\Big(\sqrt[l]{\frac{\gamma_{j}^{s_{j}}}{T}} \Big)
\end{equation*}
with $s_{j}r_{j}\equiv 1\bmod{l}$. Now $(\gamma_{j}/T^{r_{j}})^{s_{j}}=(\gamma_{j}^{s_{j}}/T^{r_{j}s_{j}})=(\gamma_{j}^{s_{j}}/T)(1/T)^{la}$. Therefore 
\begin{equation*}
    K_{j_{\infty}}=k_{\infty}\Big(\sqrt[l]{\frac{\delta}{T}}\Big),\hspace{0.2cm} \text{with} \hspace{0.2cm} \delta=\gamma_{j}^{s_{j}}\in \mathbb{F}_{q}^{*}.
\end{equation*}
Note that $\delta\not\equiv (-1)\bmod{(\mathbb{F}_{q}^{*})^{l}}$. Otherwise, if $\delta\equiv (-1)\bmod{(\mathbb{F}_{q})^{l}}$, since $s_{j}r_{j}=1+la$, we have $\delta^{r_{j}}=\gamma_{j}^{r_{j}s_{j}}=\gamma_{j}^{(1+la)}=\gamma_{j}\gamma_{j}^{la}$, and $\delta^{r_{j}}\equiv (-1)^{r_{j}}\equiv (-1)^{lm_{j}}(-1)^{-n_{j}}\bmod{(\mathbb{F}_{q}^{*})^{l}}$. Thus
\begin{equation*}
    (-1)^{n_{j}}\gamma_{j}\equiv \Big( \frac{(-1)^{m_{j}}}{\gamma_{j}^{a}}\Big )^{l}\in (\mathbb{F}_{q}^{*})^{l}.
\end{equation*}
It follows that $\delta\not\equiv(-1)\bmod{(\mathbb{F}_{q}^{*})^{l}}$. 

Now, we define $\Pi\coloneq\pi_{K_{j_{\infty}}}=\sqrt[l]{\delta/T}$. Whence $\Pi^{l}=\delta\pi_{\infty}$. We have
\begin{equation*}
\textrm{Irr}(\Pi,x,k_{\infty})=x^{l}-\delta\pi_{\infty}=\prod_{j=0}^{l-1}(x-\zeta_{l}^{j}\Pi_{K_{j_{\infty}}}).    
\end{equation*}
Therefore $(-1)^{l}\prod_{j=0}^{l-1}(\zeta_{l}^{j}\Pi)=-\delta\pi_{\infty}$. Thus $N_{K_{j_{\infty}}|k_{\infty}}\Pi=(-1)^{l-1}\delta\pi_{\infty}$. Now, let $\Pi u$ be any other prime from $K_{j_{\infty}}$, with $u\in U_{K_{j_{\infty}}}$. We have $u=\xi v$, where $\xi\in \mathbb{F}_{q}^{*}$ and $v\in U_{K_{j_{\infty}}}^{(1)}$. Hence
\begin{equation*}
    N_{K_{j_{\infty}}|k_{\infty}}(\Pi u)=(-1)^{l-1}\delta\pi_{\infty}\xi^{l}w, \hspace{0.3cm} \text{with}\hspace{0.3cm} w=N_{K_{j_{\infty}}|k_{\infty}}v.
\end{equation*}
Hence $\phi_{K_{j_{\infty}}}(\Pi u)=(-1)^{l-1}\delta\xi^{l}$. Now, let $x\in K_{j_{\infty}}^{*}$ be  an arbitrary element, $x=\Pi^{b}\xi v$. Therefore $ N_{K_{j_{\infty}}|k_{\infty}}(x)=((-1)^{l-1}\delta\pi_{\infty})^{b}\xi^{l}w$, so that \begin{equation*}
\begin{split}
 \phi_{K_{j_{\infty}}}(x)&=\phi_{\infty}(((-1)^{l-1}\delta\pi_{\infty})^{b}\xi^{l}w)= (-1)^{(l-1)b}\delta^{b}\xi^{l}.
\end{split}
\end{equation*}
Now, we want to find the minimum $b\geq 1$ and $\xi\in \mathbb{F}_{q}^{*}$ such that $x\in\ker \phi_{K_{j_{\infty}}}$, that is $(-1)^{(l-1)b}\delta^{b}\xi^{l}=1$. Note that, if we assume $b=1$, then
$(-1)^{l-1}\delta=\xi^{-l}\in (\mathbb{F}_{q}^{*})^{l}$. Thus $(-1)^{l}\delta\equiv (-1)\bmod{(\mathbb{F}_{q}^{*})^{l}}\equiv \delta \bmod{(\mathbb{F}_{q}^{*})^{l}}$, and $\delta\equiv (-1)\bmod{(\mathbb{F}_{q}^{*})^{l}}$. This is a contradiction. Now, since $\delta^{-1}\Pi_{K_{j_{\infty}}}^{l}\in \ker \phi_{K_{j_{\infty}}}$,  we have that 
\begin{equation*}
  \min \{m\in \mathbb{N}\;|\; \text{there exists}\; \overrightarrow{\alpha}\in U_{K_{j}}^{+}\;\text{with}\; \deg\overrightarrow{\alpha}=m \}=l.   
\end{equation*}
It follows that the field of constants of $K_{j_{H^{+}}}$ is $\mathbb{F}_{q^{l}}$ (Theorem \ref{teo:2.10}). 

Therefore the field of constants of $K_{\mathfrak{gex}}$ is $\mathbb{F}_{q^{l}}$, and the field of constants of $K_{\mathfrak{ge}}$ is $\mathbb{F}_{q}$. Finally, we have that $K_{\mathfrak{gex}}=E_{\mathfrak{gex}}K$.
\end{subsection}

We summarize the Kummer case.

\begin{teo}\label{teo:4.1.4}
Let $ K / k $ be a geometric extension of congruence functions fields, such that $K=K_{1}\cdots K_{m}$ with $K_{j}=k(\sqrt[l]{\gamma_{j}D_{j}})$, $D_{j}$ a monic $l$-power free polynomial, $\gamma_{j}\in \mathbb{F}_{q}^{*}$, $1\leq j\leq m$ and  $\mathrm{Gal}(K/k)=C_{l}^{m}$. Let $\xi_{j}\coloneqq (-1)^{\mathrm{gr}\: D_{j}}\gamma_{j}$ and $\{P_{1},\dots,P_{r}\in R_{T}^{+}\;|\; P|D_{j}\quad \text{for some} \quad j\leq t\leq m \}$. Set $n_{j}=\deg\:D_{j}$ and $d_{i}=\deg\,\:P_{i}$ for $1\leq j\leq m$ and $1\leq i\leq r$. We arrange the polynomials $ P_{i}'$s such that $l|d_{i}$ $1\leq i\leq s$ and $l\nmid d_{j}$, $s+1\leq j\leq r$, $0\leq s\leq r$. Let $E=k(\sqrt[l]{D_{1}^{*}},\dots,\sqrt[l]{D_{m}^{*}})$. Then

\begin{equation*}
    E_{\mathfrak{gex}}= \left\{ \begin{array}{lcc}k(\sqrt[l]{P_{1}},\dots,\sqrt[l]{P_{r}})&  \text{if} &s=r \\
    k(\sqrt[l]{P_{1}},\dots, \sqrt[l]{P_{s}}, \sqrt[l]{\smash{P_{s+1}}^{*}},\dots \sqrt[l]{\smash{P_{r}}^{*}} )& \text{if}& s\leq r-1.
    \end{array}
   \right.
  \end{equation*}  
Let $ Q_{i} \coloneqq P_{i}P_{r}^{-bd_{i}} $, $ 1 \leq i \leq r-1 $ with $al+bd_{r}=1$ and $M=k(\sqrt[l]{Q_{1}},\dots,\sqrt[l]{Q_{r-1}})$. Then the genus field $ K_{\mathfrak{ge}} $ of $ K $ over $k$ is:
\\

\item [(a)] $K_{\mathfrak{ge}}=E_{\mathfrak{gex}}K$ if any of the following conditions holds: 
\begin{itemize}
 
    \item $l|d_{r}$ ($s=r$). 
    
     \item $\xi_{j}\in  (\mathbb{F}_{q}^{*})^{l}$  $\forall \;1\leq j\leq m$ and $l\nmid n_{j}$ for some $1\leq j\leq m$.
     
     \item $\xi_{j'}\notin (\mathbb{F}_{q}^{*})^{l}$ for some $1\leq j'\leq m$, $l\nmid n_{j}$ for some $1\leq j\leq m$ and $f_{\infty}(K|k)=l$. (See equation (\ref{ecua:4.5}))
   
\end{itemize}
\item[(b)]$K_{\mathfrak{ge}}=MK$ if any of the following conditions holds:
\begin{itemize}
 
     \item  $l| n_{j}$ for all $1\leq j\leq m$.

    \item $\xi_{j'}\notin (\mathbb{F}_{q}^{*})^{l}$ for some $1\leq j'\leq m$, $l\nmid n_{j}$ for some $1\leq j\leq m$ and $f_{\infty}(K|k)=1$. (See equation (\ref{ecua:4.5})).
\end{itemize}
 The extended genus field $K_{\mathfrak{gex}}$ of $K$ over $k$ is:
\begin{equation*}
    K_{\mathfrak{gex}}=E_{\mathfrak{gex}}K.
\end{equation*}

\end{teo}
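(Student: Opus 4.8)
The plan is to assemble the statement from the case analysis carried out above, organizing everything around the two invariants that control the behaviour of $\mathfrak{p}_{\infty}$: the divisibility of the $\deg P_i$ by $l$, and the class of $\xi_j = (-1)^{\deg D_j}\gamma_j$ modulo $(\mathbb{F}_q^*)^l$ (equivalently, whether $K_j = k(\sqrt[l]{D_j^*})$ or not).

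First I would pin down $E_{\mathfrak{gex}}$. Since the cyclotomic part of $E$ satisfies $E \subseteq k(\Lambda_N)$, Theorem \ref{teo:6.2} identifies $E_{\mathfrak{gex}}$ with the field associated to $Y = \prod_{P\in R_T^+} X_P = \prod_{i=1}^r \langle \chi_{P_i}\rangle$; the conductor of $\chi_{P_i}$ is exactly $P_i$, so the field attached to $\langle\chi_{P_i}\rangle$ is $k(\sqrt[l]{P_i^*})$, and taking the compositum together with the observation that $k(\sqrt[l]{P_i^*}) = k(\sqrt[l]{P_i})$ precisely when $l \mid \deg P_i$ yields the two-line formula for $E_{\mathfrak{gex}}$ according to whether $s = r$ or $s < r$.

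Next, the genus field. Here I invoke Theorem \ref{teo:4.1.3}: $K_{\mathfrak{ge}} = E_{\mathfrak{ge}}^{H_1}K$, where $H$ is the decomposition group of $S_\infty(K)$ in $E_{\mathfrak{ge}}K/K$, $H_1 = H|_{E_{\mathfrak{ge}}}$, and $|H| = f_\infty(EK|K) \mid q-1$. So everything reduces to (i) computing $E_{\mathfrak{ge}}$ from $E_{\mathfrak{gex}}$, which is the cyclotomic-side computation of Cases~1--3 (using Proposition \ref{pro: 2.5} to read off the splitting of $\mathfrak{p}_\infty$ in each $k(\sqrt[l]{P_i^*})$ and in each $k(\sqrt[l]{D_j^*})$, and Abhyankar's lemma to propagate $e_\infty$ to compositums), and (ii) deciding whether $H$ is trivial or cyclic of order $l$. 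For (ii), the tower $k \subseteq K \subseteq EK = K\mathbb{F}_{q^l}$ and the equivalence $f_\infty(EK|K) = l \iff f_\infty(K|k) = 1$ reduce the question to the value of $f_\infty(K|k) \in \{1,l\}$, which is controlled by the characterisation (\ref{ecua:4.5}) of when $f_\infty(K|k)=l$ in terms of the degree-$l$ subfields $F_t$. When $l\mid\deg P_i$ for all $i$, or when $s<r$ and $l\mid\deg D_j$ for all $j$, one has either $E=K$ (cyclotomic case) or $f_\infty(EK|K)\mid e_\infty(E\,|\,E\cap K)=1$, hence $H=\langle\mathrm{Id}\rangle$, giving $K_{\mathfrak{ge}}=E_{\mathfrak{gex}}K$ or $MK$ respectively; when $l\nmid\deg D_j$ for some $j$ the dichotomy is exactly $f_\infty(K|k)\in\{1,l\}$, and in the inert case (\ref{ecua:4.11}) identifies $E_{\mathfrak{ge}}^{H_1}=M$. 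This exhausts the bulleted subcases of (a) and (b).

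Finally, the extended genus field. For Cases~1--3, $K=E$, so Theorem \ref{teo:6.2} gives $K_{\mathfrak{gex}}=E_{\mathfrak{gex}}=E_{\mathfrak{gex}}K$ at once; for Cases~4--6, $H=\langle\mathrm{Id}\rangle$, so Theorem \ref{teo:6.3} gives $K_{\mathfrak{gex}}=E_{\mathfrak{gex}}K$. The one delicate point --- which I expect to be the main obstacle --- is Case~7, where $H\cong C_l$ and Theorem \ref{teo:6.3} only sandwiches $K_{\mathfrak{gex}}=DK$ with $(E_{\mathfrak{ge}}^{H})_{\mathfrak{gex}}\subseteq D\subseteq E_{\mathfrak{gex}}$. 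To force $D=E_{\mathfrak{gex}}$ I would show the constant field of $K_{\mathfrak{gex}}$ is $\mathbb{F}_{q^l}$: since $(k(\sqrt[l]{\gamma_j D_j}))_{\mathfrak{gex}}\subseteq K_{\mathfrak{gex}}$ for a $j$ with $l\nmid n_j$, it suffices to compute the constant field of the extended Hilbert class field of $K_j=k(\sqrt[l]{\gamma_j D_j})$. This is a purely local computation at $\mathfrak{p}_\infty$: complete $K_j$ at $\mathfrak{p}_\infty$, absorb the $U_\infty^{(1)}$-part of $\gamma_j D_j$ into an $l$-th power and reduce to the shape $\delta/T$ with $\delta=\gamma_j^{s_j}$, verify $\delta\not\equiv -1 \bmod (\mathbb{F}_q^*)^l$, compute the sign function $\phi_{K_{j,\infty}}$ on a prime element $\Pi=\sqrt[l]{\delta/T}$ and hence on all of $K_{j,\infty}^*$, and deduce via Theorem \ref{teo:2.10} that the minimal positive degree realized in $U_{K_j}^{+}$ is $l$, so $K_{j,H^+}$ has constant field $\mathbb{F}_{q^l}$. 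Since $m_0=f_\infty(EK|K)f_\infty(K|k)=l$ (Theorem \ref{teo:3.4} with $s=0$) makes $E_{\mathfrak{gex}}K/K_{\mathfrak{ge}}$ a degree-$l$ constant extension, this pins $D=E_{\mathfrak{gex}}$, giving $K_{\mathfrak{gex}}=E_{\mathfrak{gex}}K$ in every case. Assembling the three parts completes the proof.
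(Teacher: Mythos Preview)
Your proposal is correct and follows essentially the same route as the paper: you organize the argument around the same seven cases (cyclotomic Cases~1--3 and non-cyclotomic Cases~4--7), invoke Theorem~\ref{teo:4.1.3} and the equivalence (\ref{ecua:4.10}) to reduce $K_{\mathfrak{ge}}$ to the computation of $E_{\mathfrak{ge}}$ and the size of $H$, and handle the extended genus field via Theorems~\ref{teo:6.2} and~\ref{teo:6.3} together with the local sign-function computation at $\mathfrak{p}_\infty$ in Case~7 to pin down the constant field of $K_{j,H^+}$ as $\mathbb{F}_{q^l}$. The only minor difference is that in Case~7 the paper first splits into the sub-cases $l\nmid d_{r-1}$ (where $(E_{\mathfrak{ge}}^H)_{\mathfrak{gex}}=E_{\mathfrak{gex}}$ and Theorem~\ref{teo:6.3} concludes directly) and $l\mid d_{r-1}$ (where the local computation is actually needed), whereas you go straight to the local computation; since that computation settles both sub-cases uniformly, your streamlining is harmless.
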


\section{Genus field when $K/k$ is not a Kummer extension}
\hfill\break

\begin{pro}
Let $ K / k $ be a finite abelian extension of global functions fields, $ P \in R_ {T}^{+} $ with $ d = \deg\: P $. Suppose that $ P $ is tamely ramified in $ K / k $. If $ e = e_{P} (K| k) $ denotes the ramification index of $ P $ in $ K / k $, then $ e | q^{d} -1 $, where the constant field of $ K $ is $ \mathbb{F}_{q}$.
\end{pro}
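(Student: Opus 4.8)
The plan is to analyze the inertia group at a prime of $K$ above $P$ and exploit that, because $K/k$ is abelian, this group is stable under the decomposition group \emph{with trivial conjugation}. Fix a prime $\mathfrak{P}$ of $K$ above $P$ and write $D=D_{\mathfrak{P}/P}(K|k)$ and $I=I_{\mathfrak{P}/P}(K|k)$ for the decomposition and inertia groups. Since $\mathrm{Gal}(K|k)$ is abelian, $D$ and $I$ are normal subgroups independent of the choice of $\mathfrak{P}$, and $|I|=e$. Because $P$ is tamely ramified, $I$ is cyclic of order $e$, and there is the tame character
\begin{equation*}
\theta\colon I\longrightarrow \overline{\mathbb{F}}_{q}^{\,*},\qquad \theta(\sigma)=\overline{\sigma(\pi)/\pi},
\end{equation*}
where $\pi$ is a uniformizer at $\mathfrak{P}$ and the bar denotes reduction modulo $\mathfrak{P}$; this $\theta$ is an injective homomorphism, independent of the choice of $\pi$, and its image consists of $e$-th roots of unity. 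In particular $\theta(I)$ is a cyclic subgroup of order $e$ of the multiplicative group of the residue field $\kappa(\mathfrak{P})$ of $K$ at $\mathfrak{P}$.

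Next I would establish the equivariance of $\theta$ under $D$: for $\tau\in D$ and $\sigma\in I$ one has $\theta(\tau\sigma\tau^{-1})=\bar\tau\bigl(\theta(\sigma)\bigr)$, where $\bar\tau$ denotes the induced action of $\tau$ on $\kappa(\mathfrak{P})$. This is checked by computing $\theta(\sigma)$ with respect to the uniformizer $\tau^{-1}(\pi)$ and then applying $\bar\tau$, using that $\theta$ does not depend on the uniformizer. The residue field of $k$ at $P$ is $\mathbb{F}_{q}[T]/(P)\cong\mathbb{F}_{q^{d}}$, so $D/I$ is cyclic, generated by a Frobenius element whose action on $\kappa(\mathfrak{P})$ is $x\mapsto x^{q^{d}}$. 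Since $\mathrm{Gal}(K|k)$ is abelian, conjugation by any $\tau\in D$ is trivial on $I$; hence by the equivariance $\bar\tau\bigl(\theta(\sigma)\bigr)=\theta(\sigma)$ for every $\sigma\in I$ and every $\tau\in D$, that is, every element of $\theta(I)$ is fixed by $x\mapsto x^{q^{d}}$ and therefore lies in $\mathbb{F}_{q^{d}}$. Thus $\theta(I)$ is a cyclic subgroup of order $e$ of $\mathbb{F}_{q^{d}}^{\,*}$, which is cyclic of order $q^{d}-1$, and we conclude $e\mid q^{d}-1$.

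The only step requiring genuine care is the $D$-equivariance of the tame character together with the correct identification of the Frobenius action on $\kappa(\mathfrak{P})$; once that is in place, the abelian hypothesis does all the work. I expect no real obstacle beyond being attentive to the residue field: the naive estimate coming only from the tameness of $\mathfrak{P}/P$ (which gives $\theta(I)\subseteq\kappa(\mathfrak{P})^{*}$, hence merely $e\mid q^{df}-1$ with $f=f_{\mathfrak{P}/P}$) is too weak, and the improvement to $q^{d}-1$ really uses that \emph{all} of $\mathrm{Gal}(K|k)$ is abelian. Alternatively, one may pass to the completions: $D\cong\mathrm{Gal}(K_{\mathfrak{P}}|k_{P})$ is abelian, $K_{\mathfrak{P}}|k_{P}$ is a tamely ramified abelian extension of local fields whose base field has residue field $\mathbb{F}_{q^{d}}$, and $e\mid q^{d}-1$ is then the standard description of such extensions; the argument above is simply the explicit form of that fact.
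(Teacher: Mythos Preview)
Your argument is correct and is the standard proof via the tame character $\theta$ and its $D$-equivariance; the key observation---that abelianness forces $\theta(I)$ into the fixed field of the Frobenius $x\mapsto x^{q^d}$, hence into $\mathbb{F}_{q^d}^{*}$ rather than merely into $\kappa(\mathfrak{P})^{*}$---is exactly right, and your remark distinguishing this from the naive bound $e\mid q^{df}-1$ shows you understand where the abelian hypothesis enters.

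There is nothing to compare against in the paper itself: the paper does not prove this proposition but simply cites \cite[Proposici\'on~10.4.8]{calderon2016campos}. Your self-contained argument is precisely the textbook proof (it is the same computation one finds, for instance, in Serre's \emph{Local Fields}, IV.2, Proposition~7 and its corollaries, transported to the global setting), and the alternative you mention via completions is equivalent. No gaps.
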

\begin{proof}
See \cite[Proposición 10.4.8]{calderon2016campos}
\end{proof}
Now we assume that $K/k$ is an elementary abelian extension that is not a Kummer extension.
In our case, we have that the inertia group of $\mathfrak{p}_{\infty}$, is $C_{l}$ or $\langle\mathrm{Id}\rangle$. If $ I_{\infty}(K: k) \cong C_{l} $, then $ e_{\infty} (K| k) = l $ and since $\deg\:\mathfrak{p}_{\infty} = 1 $, we have $ l | q^{1} -1 $, contrary to the hypothesis that $ l \nmid q-1 $. Therefore $ e_{\infty} (K| k) = 1 $, that is, $ \mathfrak{p}_{\infty} $ is not ramified in $ K / k $. We have two cases: $ K $ is contained or not, in a cyclotomic function field.
\subsection{Genus field when $K$ is a cyclotomic function field.}
\hfill\break
When $K$ is a cyclotomic field, we use the notation of Section 3.1. We have that $K=E$ is the field associated with $X$, $L$ is the field associated with $Y$, and $S=\{P_{1},\dots,P_{r}\}$ denotes the set of finite primes that are ramified in $E/k$.

We define $L_{i}\coloneqq k(\Lambda_{P_{i}})^{X_{P_{i}}^{\perp}}$. Then $ L_ {i} $ is the only subfield of $ k (\Lambda_ {P_{i}}) / k $ of degree $ l $ over $ k $, that is, $ k \subseteq L_ {i} \subseteq k (\Lambda_{P_{i}}) $ is such that $ [L_{i}: k] = l $, which is a cyclic extension for $1\leq i\leq r$. Now, we have that $K\subseteq L_{1}\cdots L_{r}\subseteq k(\Lambda_{P_{1}\cdots P_{r}})$. Hence $e_{P_{i}}(L_{1}\cdots L_{r}|k)=|X_{P_{i}}|=e_{P_{i}}(E|k)=l$. Therefore $L_{1}\cdots L_{r}/E$ is an unramified extension. Hence $K_{\mathfrak{ge}}=E_{\mathfrak{ge}}=L_{1}\cdots L_{r}=E_{\mathfrak{gex}}=K_{\mathfrak{gex}}$.

\subsection{Genus field when $K$ is a not cyclotomic field.}
\hfill\break
In this case we have that $EK\neq K $ where $E$ is given by $E\coloneqq K_{m_{0}}\cap k(\Lambda_{N})$. As in the Kummer case, $ EK / K $ is an extension of constants, and therefore unramified. Let $ H '= D_{\infty} (EK: K) $ be the decomposition group of $ \mathfrak{p}_{\infty} $ in $ EK / K $. We have that $ H '|_{E} = I_{\infty} (E: k)=\langle Id\rangle$. From Theorem \ref{teo:4.1.3}, the genus field of $K$ is equal to
\begin{equation*}
    K_{\mathfrak{ge}}=E_{\mathfrak{ge}}^{H_{1}}K=E_{\mathfrak{ge}}K.
\end{equation*}

We summarize the non-Kummer case in the following result.

\begin{teo}
Let $K/k$ be a geometric extension of congruent funtion fields, such that $K=K_{1}\cdots K_{m}$ with $\mathrm{Gal}(K/k)=C_{l}^{m}$ and $l\nmid q-1$. Let $S=\{P\in R_{T}^{+}\;|\;P|D_{T}\;\hspace{0.3cm}\text{for some}\hspace{0.3cm} 1\leq t\leq m\}=\{P_{1},\dots,P_{r}\}$ the set of ramified primes in $K/k$. Then the genus field $K_{\mathfrak{ge}}$ and the extended genus field $K_{\mathfrak{gex}}$ of $K$ over $k$ are:

   \begin{equation*}
        K_{\mathfrak{ge}}=K_{\mathfrak{gex}}=E_{\mathfrak{gex}}K,
    \end{equation*}
    
\noindent where $L=L_{1}\cdots L_{r}$ with $L_{i}$ the subfield of $k(\Lambda_{P_{i}})$ of degree $l$ over $k$, $1\leq i\leq r$.
    
\end{teo}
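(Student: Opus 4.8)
The plan is to assemble the final theorem directly from the case analysis already carried out in the non-Kummer setting, so the proof is essentially a bookkeeping argument rather than a new computation. First I would invoke the opening observation of Section~5: since $\mathrm{Gal}(K/k)\cong C_l^m$ and $l\nmid q-1$, the inertia group $I_\infty(K:k)$ must be a cyclic $l$-group, hence trivial, so $\mathfrak{p}_\infty$ is unramified in $K/k$. This is the structural fact that forces both the Kummer-style obstruction term and the constant-field discrepancy to vanish, which is why the two fields $K_{\mathfrak{ge}}$ and $K_{\mathfrak{gex}}$ collapse onto a single answer. Then I would split along the dichotomy $K\subseteq k(\Lambda_N)$ versus $K\not\subseteq k(\Lambda_N)$, exactly as in Subsections~5.1 and~5.2.

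In the cyclotomic case $K=E$, I would recall the construction $L_i\coloneqq k(\Lambda_{P_i})^{X_{P_i}^\perp}$, the unique degree-$l$ subfield of $k(\Lambda_{P_i})/k$, and note $K\subseteq L_1\cdots L_r\subseteq k(\Lambda_{P_1\cdots P_r})$. The ramification identity $e_{P_i}(L_1\cdots L_r\,|\,k)=|X_{P_i}|=e_{P_i}(E|k)=l$ shows $L_1\cdots L_r/E$ is unramified at every finite prime; since $\mathfrak{p}_\infty$ is unramified and splits appropriately in the cyclotomic tower, $L=L_1\cdots L_r$ lies in $E_H=K_H$, and maximality of the abelian-over-$k$ subextension gives $K_{\mathfrak{ge}}=E_{\mathfrak{ge}}=L$; because there is no constant extension forced in, $L=E_{\mathfrak{gex}}=K_{\mathfrak{gex}}$ as well. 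In the non-cyclotomic case I would apply Theorem~\ref{teo:4.1.3} with $E=K_{m_0}\cap k(\Lambda_N)$: here $EK/K$ is a pure constant extension hence unramified, $H'=D_\infty(EK:K)$ restricts on $E$ to $I_\infty(E:k)=\langle\mathrm{Id}\rangle$, and since $E=K\mathbb{F}_{q^{m_0}}\cap k(\Lambda_N)$ inherits unramifiedness of $\mathfrak{p}_\infty$ one checks $H=\langle\mathrm{Id}\rangle$, so $K_{\mathfrak{ge}}=E_{\mathfrak{ge}}^{H_1}K=E_{\mathfrak{ge}}K$; then Theorem~\ref{teo:6.3}, in the case $H=\langle\mathrm{Id}\rangle$, yields $K_{\mathfrak{gex}}=E_{\mathfrak{gex}}K$, and Cases~4--6 of Section~3 (applicable because $\mathfrak{p}_\infty$ is unramified, so we are in the analogue of Case~4) give $E_{\mathfrak{ge}}=E_{\mathfrak{gex}}$, collapsing everything to $E_{\mathfrak{gex}}K$.

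The step I expect to be the main obstacle is verifying cleanly that $E_{\mathfrak{ge}}=E_{\mathfrak{gex}}$ for the field $E$ arising here, i.e.\ that no constant extension is hidden inside $E_{\mathfrak{gex}}$ that is not already in $E_{\mathfrak{ge}}$. This amounts to showing $f_\infty(E_{\mathfrak{gex}}\,|\,E)=1$, equivalently that $\mathfrak{p}_\infty$ splits completely in $E_{\mathfrak{gex}}/E$; one gets this from $f_\infty(E_{\mathfrak{gex}}\,|\,k)=1$ (the cyclotomic field has split $\mathfrak{p}_\infty$ when unramified there, or more precisely $E_{\mathfrak{gex}}\subseteq k(\Lambda_M)$ so its infinite-prime behaviour is controlled by Proposition~\ref{pro: 2.5}) together with the fact that $e_\infty(E|k)\mid l$ forces $e_\infty(E_{\mathfrak{gex}}|E)=1$. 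Once this is in place, the equality $K_{\mathfrak{ge}}=K_{\mathfrak{gex}}=E_{\mathfrak{gex}}K$ follows formally, and the concluding remark identifying $L=L_1\cdots L_r$ as the relevant generating field is immediate from the cyclotomic-case construction transported through $E\subseteq K$ via the $P$-part characters $X_{P_i}$.
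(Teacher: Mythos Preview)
Your outline is correct and follows the paper's structure in Section~4: the hypothesis $l\nmid q-1$ forces $I_\infty(K:k)=\langle\mathrm{Id}\rangle$, and then one handles the cyclotomic and non-cyclotomic cases exactly as you describe, the latter via Theorem~\ref{teo:4.1.3} with $H\cong H_1\cong H_2=\langle\mathrm{Id}\rangle$ and Theorem~\ref{teo:6.3}.

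Two citations are misplaced, however. Proposition~\ref{pro: 2.5} and Cases~4--6 of Section~3 are both Kummer-specific: they presuppose $l\mid q-1$ so that one can write subfields as $k(\sqrt[l]{\gamma D})$, and they are simply unavailable in the present setting. The step you flag as the main obstacle, $E_{\mathfrak{ge}}=E_{\mathfrak{gex}}$, is in fact immediate and needs neither of these. The field $E=K\mathbb{F}_{q^{m_0}}\cap k(\Lambda_N)$ is itself a cyclotomic elementary abelian $l$-extension of $k$ with $l\nmid q-1$, so the cyclotomic case you have just established applies verbatim to $E$ and gives $E_{\mathfrak{ge}}=E_{\mathfrak{gex}}=L_1\cdots L_r$. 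Equivalently, and this is the argument underlying your parenthetical remark, $E_{\mathfrak{gex}}\subseteq k(\Lambda_M)$ has $l$-power degree over $k$, while $e_\infty(k(\Lambda_M)|k)=q-1$ and $f_\infty(k(\Lambda_M)|k)=1$; coprimality of $l$ and $q-1$ then forces $\mathfrak{p}_\infty$ to split completely in $E_{\mathfrak{gex}}/k$, so $E_{\mathfrak{ge}}=E_{\mathfrak{gex}}$. With these citation fixes your argument coincides with the paper's.
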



\begin{thebibliography}{xxxx}

\bibitem{angles2000theorie}  {Angl{\`e}s, Bruno and Jaulent, Jean-Fran{\c{c}}ois},
 \textit{Th{\'e}orie des genres des corps globaux},
{manuscripta math},
  \textbf{101} no.{4}, {513--532}, {2000},
 {Springer}


\bibitem{bae1996genus}   {Bae, Sunghan and Koo, Ja Kyung},
  \textit{Genus theory for function fields},
 {Journal of the Australian Mathematical Society},
  \textbf{60} no. {3}, {301--310}, {1996},
 {Cambridge University Press}.

\bibitem{barreto2018genus}
{Barreto-Castaneda, Jonny Fernando, Montelongo-Vazquez, 
Carlos, Reyes-Morales, Carlos Daniel, Rzedowski-Calder\'on, Martha and 
Villa-Salvador, Gabriel},
\textit{Genus fields of abelian extensions of rational congruence function fields, {II}},
{Rocky Mountain Journal of Mathematics},
\textbf{48}, no. {7}, {2099--2133}, {2018},
 {Rocky Mountain Mathematics Consortium}.

\bibitem{bautista2013genus}
 {Bautista-Ancona, V{\'\i}ctor, Rzedowski-Calder{\'o}n, Martha and Villa-Salvador, Gabriel},
\textit{Genus fields of cyclic $l$-extensions of rational function fields}, 
{International Journal of Number Theory} \textbf{9}, no. {5},
{1249--1262}, {2013}, {World Scientific}.

\bibitem{clement1992genus}   {Clement, Rosario},
  \textit{The genus field of an algebraic function field},
 {Journal of Number Theory},
  \textbf{40} no. {3}, {359--375}, {1992},
 {Academic Press}.

\bibitem{frohlich1983central}  {Fr{\"o}hlich, Albrecht},
\textit{Central extensions, {G}alois groups, and ideal class groups of number fields},
 {Contemporary Mathematics, American Mathematical Society Providence, RI},
 \textbf{24}, {1983}.

\bibitem{gauss} Gauss, Carl Friedrich, Disquisitiones arithmeticae,
1801.
  
\bibitem{hasse1951} Hasse, Helmut,
\textit {Zur {G}eschlechtertheorie in quadratischen {Z}ahlk\"orpern},
J. Math. Soc. Japan, \textbf{3}, no. 1, 45--51, 1951,
Mathematical Society of Japan.

\bibitem{hu2010genus}   {Hu, Su and Li, Yan},
  \textit{The genus fields of {A}rtin-{S}chreier extensions},
 {Finite Fields and Their Applications},
  \textbf{16} no. {4}, {255--264}, {2010}, {Elsevier}.

\bibitem{ishida2006genus} {Ishida, Makoto},
  \textit{The genus fields of algebraic number fields},
  Lecture Notes in Mathematics \textbf{555}, Springer, 2006.

\bibitem{leopoldt1953} {Leopoldt, Heinrich W},
  \textit{Zur {G}eschlechtertheorie in abelschen {Z}ahlk{\"o}rpern},
 {Mathematische Nachrichten}, \textbf{9}, no. {6}, {351--362}, 1953,
{Wiley Online Library}.

\bibitem{MALDONADORAMIREZ201340}
Maldonado-Ram\'irez, Myriam, Rzedowski-Calder\'on, Martha and Villa-Salvador, Gabriel,
\textit{Genus fields of abelian extensions of rational congruence function fields},
Finite Fields and Their Applications \textbf{20}, 40--54, 2013.

\bibitem{MALDONADORAMIREZ2015283}
Maldonado-Ram\'irez, Myriam, Rzedowski-Calder\'on, Martha and Villa-Salvador, Gabriel,
\textit{Corrigendum to {G}enus fields of abelian extensions of rational congruence 
function fields [Finite Fields Appl. \textbf{20} (2013) 40--54]},
Finite Fields and Their Applications \textbf{33}, 283--285, 2015.

\bibitem{maldonado2017genus} Maldonado-Ram\'irez, Myriam, Rzedowski-Calder\'on, Martha and Villa-Salvador, Gabriel,
\textit{Genus fields of congruence function fields}, 
Finite Fields and Their Applications \textbf{44}, 56--75, 2017,
 {Elsevier}.

\bibitem{peng2003genus}   {Peng, Guohua},
  \textit{The genus fields of {K}ummer function fields},
 {Journal of Number Theory}, \textbf{98} no. {2}, {221--227},
  {2003}, {Elsevier}.
  
  \bibitem{ramirezramirez2019genus}   {Ram{\'i}rez-Ram{\'i}rez, Elizabeth;  Rzedowski-Calder{\'o}n, Martha and Villa-Salvador, Gabriel},
      \textit{Genus fields of global fields}, 
       Palestine Journal of Mathematics \textbf{9}, no. {2}, {999--1019}, {2020}.
     
   

\bibitem{rosen1987hilbert}   {Rosen, Michael},
\textit{The {H}ilbert class field in function fields},
{Expo Math}, \textbf{5}  no. {4}, {365--378}, {1987}.

\bibitem{calderon2016campos}  {Rzedowski-Calder{\'o}n, Martha and Villa-Salvador, Gabriel},
  \textit{Campos ciclot\'omicos, segunda versi\'on},
 {arXiv:1407.3238}, {2017}.
 
 \bibitem{rzedowskicalderon2021class}  {Rzedowski-Calder{\'o}n, Martha  and Villa-Salvador, Gabriel},
  \textit{Class fields, Dirichlet characters and extended genus fields of global function fields}, 
     {to appear in Mathematische Nachrichten}.

\bibitem{zhang1985}   {Zhang, Xian Ke},
  \textit{A simple construction of genus fields of abelian number fields},
 {Proceedings of the American Mathematical Society}
\textbf{94}, no. {3}, {393--395}, {1985}.





\end{thebibliography}
\end{document}